\newcommand*{\rom}[1]{\expandafter\@slowromancap\romannumeral #1@}
      \newcommand{\href}[2]{#2}
\newtheorem{theorem}{Theorem}[section] %[section]
\newtheorem{lemma}[theorem]{Lemma}
\newtheorem*{proposition*}{Proposition}
\newtheorem*{question*}{Question}
\newtheorem*{theorem*}{Theorem}
\newtheorem*{claim*}{Claim}
\newtheorem*{thmI}{Theorem \rom{1}}
\newtheorem*{thmII}{Theorem \rom{2}}
\newtheorem*{corollary*}{Corollary}
\theoremstyle{definition}
\theoremstyle{remark}
\newtheorem{remark}[theorem]{Remark}
\numberwithin{equation}{section}
\newcommand{\til}{\tilde} %override
\newcommand{\homeo}{\operatorname{Homeo}}
\newcommand{\disp}{\phi}
\newcommand{\dist}{\mathrm{dist}}
\newcommand{\uwidth}{\operatorname{uw}}
\newcommand{\lwidth}{\operatorname{lw}}
\newcommand{\wind}{\operatorname{W}}
\newcommand{\diamup}{\operatorname{D}}
\newcommand{\R}{\mathbb{R}}\newcommand{\N}{\mathbb{N}}
\newcommand{\Z}{\mathbb{Z}}
\newcommand{\T}{\mathbb{T}}
\newcommand{\A}{\mathbb{A}}
\renewcommand{\SS}{\mathbb{S}}
\newcommand{\card}{\#} % Cardinality
\newcommand{\sm}{\setminus}
\newcommand{\li}{\hat}
\newcommand{\mc}{\mathcal}
\newcommand{\abs}[1]{\big\lvert{#1}\big\rvert}
\newcommand{\ie}{i.e.\ }
\DeclareMathOperator{\pr}{\rm{pr}}
\DeclareMathOperator{\bd}{\partial}
\DeclareMathOperator{\diam}{\rm{diam}}
\begin{document}

\author[A. Koropecki]{Andres Koropecki}
\address{Instituto de Matem\'atica e Estat\'\i stica, Universidade Federal Fluminense}
\curraddr{Rua Prof. Marcos Waldemar de Freitas Reis, S/N - Bloco H, 4o Andar, Campus do Gragoat\'a, Niterói, RJ, Brasil}
\email{ak@id.uff.br}

\author[A. Passeggi]{Alejandro Passeggi}
\address{UdelaR, Facultad de Ciencias.}
\curraddr{Igua 4225 esq. Mataojo. Montevideo, Uruguay.}
\email{alepasseggi@gmail.com}

\author[M. Sambarino]{Mart\'{\i}n Sambarino}
\address{UdelaR, Facultad de Ciencias.}
\curraddr{Igua 4225 esq. Mataojo. Montevideo, Uruguay.}
\email{samba@cmat.edu.uy}

\title[The Franks-Misiurewicz conjecture]{The Franks-Misiurewicz conjecture\\ for extensions of irrational rotations}

\begin{abstract}
We show that a toral homeomorphism which is homotopic to the identity and topologically semiconjugate to an irrational rotation of the circle is always a pseudo-rotation (\ie its rotation set is a single point). In combination with recent results, this allows us to complete the study of the Franks-Misiurewicz conjecture in the minimal case.
\end{abstract}

\maketitle

\section{Introduction}

It is a general goal in mathematics to classify objects by means of simpler invariants associated to them. In the study of the dynamics of surface maps, the rotation set is a prototypical example of this approach. Being a natural generalization in different contexts of the Poincar\'{e} rotation number of orientation preserving circle homeomorphisms, it provides basic dynamical information for surface maps in the homotopy class of the identity \cite{m-z, pollicott, franks-rot-gen}.

In the two dimensional torus, it can be said that a theory has emerged supported on this invariant (see \cite{m-z} for a wide exposition). If $F$ is a lift of a torus homeomorphism $f$ in the homotopy class of the identity, its rotation set is defined by
\small
\begin{equation*}
\rho(F)=\left\{\lim_{i\to \infty}\frac{F^{n_i}(x_i)-x_i}{n_i}:\mbox{ where }n_i\nearrow+\infty,\ x_i\in\R^2\right\}.
\end{equation*}
\normalsize

In the seminal article \cite{m-z} Misiurewicz and Ziemian proved the convexity and compactness of rotation sets.
The finite nature for the possible geometries of a convex set in the plane given by points, non-trivial line segments, or convex sets with nonempty interior, allowed to start a systematic study based on these three cases.

Results concerning this theory can be classified in two different directions. A first direction aims to obtain interesting dynamical information from knowing the geometry of the rotation set, where the list of results is huge. For instance it is known that when the rotation set has nonempty interior the map has positive topological entropy \cite{llibre-mackay} and abundance of periodic orbits and ergodic measures \cite{franks-reali, franks-reali2, m-z2}; bounded deviations properties are found both for the non-empty interior case and the non-trivial segment case \cite{davalos, addas-zanata, Forcing,Kocsard} (see also \cite{beguinsurveytoro,PassThesis} as possible surveys\footnote{Unfortunately both surveys are far from being up to date}).

A second direction aims to is to establish which kind of convex sets can be realized as rotation sets.
Here we find fundamental problems which remain unanswered (compared with the first direction, it can be said, the state of art is considerably underdeveloped).
For convex sets having non-empty interior, all known examples achieved as rotation sets have countably many extremal points \cite{Kwapisznonpol,BoyCaHall}.
For rotation sets with empty interior, there is a long-standing conjecture due to Franks and Misiurewicz \cite{franksmisiu}, which is the matter of this work. The conjecture aims to classify the possible rotation sets with nonempty interior, and it states that any such rotation set is either a singleton or a non-trivial line segment $I$ which falls in one of the following cases:
\begin{itemize}
\item[(i)] $I$ has rational slope and contains rational points\footnote{\ie points with both coordinates rational};
\item[(ii)] $I$ has irrational slope and one of the endpoints is rational.
\end{itemize}

For case (ii) A. Avila has presented a counterexample in 2014\footnote{Still unpublished.}, where a non-trivial segment with irrational slope containing no rational points is obtained as rotation set. Moreover, the counterexample is minimal ($\T^2$ is the unique compact invariant set) and $C^\infty$, among other interesting features.

Still concerning case (ii), P. Le Calvez and F. A. Tal showed that whenever the rotation set is a non-trivial segment with irrational slope and containing a rational point, this rational point must be an endpoint of the segment \cite{Forcing}, so segments of irrational slope containing rational points obey the conjecture.

Item (i), however, remains open: is it true that the only nontrivial segments of rational slope realized as a rotation set are those containing rational points? Although partial progress has been made in recent years \cite{jager-linear, jagertal, JP, Kocsard}, the question remained open even in the minimal case.

In this article we prove that, in contrast to Avila's counter example, case (i) in the conjecture is true for minimal homeomorphisms. As we see in the next paragraph, we prove that case (i) must hold in the
family of  \emph{extensions of irrational rotations} which in particular provides the answer for minimal homeomorphisms.

\subsection{Precise statement, context and scope.}

The family of \emph{extensions of irrational rotations} is given by those toral homeomorphisms in the homotopy class of the identity which are topologically semi-conjugate to an irrational rotation of the circle. The study of the conjecture in this particular family was introduced in \cite{JP}, following a program by T. J\"ager: supported in the ideas presented in \cite{jager-linear}, one may first aim to show that every possible counter example for the rational case (i) in the conjecture must be contained in this family, and as a second step one may study the conjecture in the class of extensions of irrational rotations. There is significant progress in the first step of the  program under some recurrence assumptions \cite{jager-linear, jagertal, Kocsard}. On the other hand, for the second step the only known result states that if a counter-example exists, the fibers of the conjugation must be topologically complicated \cite{JP}. This sole fact does not lead to a contradiction, since such a fiber structure is possible for extensions of irrational rotations (see \cite{beguin-crovisier-jager}). Our main result in this article completely solves the second step of J\"ager's program: there are no counter examples to the Franks-Misiurewicz conjecture in the family of extensions of irrational rotations.

The rotation set of an extension of an irrational rotation in $\T^2$ contains no rational points, and it must be either a singleton or an interval of rational slope (see for instance \cite{JP}). In \cite{jagertal} it is proved that
every area-preserving homeomorphism homotopic to the identity having a \emph{bounded deviations} property is an extension of an irrational rotation (see also \cite{jager-linear}). Recently A. Kocsard showed that minimal homeomorphisms having a non-trivial interval with rational slope as rotation set have the \emph{bounded deviations} property \cite{Kocsard}, and as a consequence every minimal homeomorphism having a non-trivial interval with rational slope as rotation set must be an extension of an irrational rotation.
% jagertal es falso sin area-preserving. Ademas, estas diciendo "si probas el caso 1 del programa on toda generalidad, entonces vale el caso 1 del programa con toda generalidad". No entiendo.
%The area-preserving  the assumptions on the recurrence of $f$ are necessary in \cite{jagertal} and \cite{Kocsard} or not. Having these kind of assumptions removed would imply the first step in J\"ager's programm in full generality.

%We must comment here that booth mentioned results might work with no other assumption than having a non-trivial interval of rational slope containing no rational points as rotation set. This would imply that every possible counter example to the conjecture must be an extension of an irrational rotation, so in light of the next result could not exist.

Our main result is the following:

\begin{thmI}\label{t.rotsetext}
The rotation set of a lift of any extension of an irrational rotation is a singleton.
\end{thmI}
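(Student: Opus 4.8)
Let $f$ be an extension of an irrational rotation, so there is a continuous surjection $h\colon \T^2\to \SS^1$ with $h\circ f = R_\alpha \circ h$ for some irrational $\alpha$, where $R_\alpha$ is rotation by $\alpha$. Lifting everything to the universal covers, we get $H\colon \R^2\to \R$ with $H\circ F = H + \alpha$ (adjusting the lift $F$ and $H$ appropriately), and $H(x+v) = H(x) + \pi_1(v)$ for $v\in\Z^2$ where I write $\pi_1$ for projection onto the first coordinate (up to an integral change of basis we may assume the cohomology class of $h$ is $(1,0)$). The plan is to analyze the rotation set by looking separately at the ``fibered'' direction (the one along which $h$ varies) and the ``fiber'' direction. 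The first key observation is that in the fibered direction the rotation vector is completely pinned down: for any $x$ and any $n$, $\pi_1(F^n(x)-x) = H(F^n x) - H(x) + (\text{integer correction})$, and since $H\circ F^n = H + n\alpha$, the first coordinate of every rotation vector equals $\alpha$. Hence $\rho(F) \subseteq \{\alpha\}\times \R$, which already recovers the known fact that $\rho(F)$ is a point or a vertical segment (in these coordinates, a segment of rational — indeed infinite/vertical — slope; in general coordinates, rational slope). So the entire problem reduces to showing that the projection of $\rho(F)$ to the \emph{second} coordinate is a single point.

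**Reduction to a statement about the vertical displacement.** Write $F(x,y) = (x', y')$ and consider the vertical displacement cocycle $\psi(x) = \pi_2(F(x) - x)$, which descends to a continuous function on $\T^2$. Then $\pi_2(\rho(F)) = \{\lim \frac{1}{n_i}\sum_{j=0}^{n_i-1}\psi(F^j x_i)\}$, i.e. it is exactly the set of ergodic-type averages of $\psi$, which (by Misiurewicz–Ziemian-type arguments, or directly) equals $\{\int \psi\, d\mu : \mu \in \mathcal{M}_f(\T^2)\}$, the set of integrals of $\psi$ over $f$-invariant measures. So \textbf{Theorem I is equivalent to: $\int \psi\, d\mu$ is independent of $\mu\in\mathcal{M}_f(\T^2)$.} Equivalently, for any two invariant measures $\mu_1,\mu_2$, the difference $\int\psi\,d\mu_1 - \int\psi\,d\mu_2 = 0$. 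The natural way to force such an identity is to produce, for a well-chosen class of test functions, a \emph{transfer} or \emph{coboundary} relation. Here is where the semiconjugacy enters decisively: every $f$-invariant measure $\mu$ pushes forward under $h$ to an $R_\alpha$-invariant measure on $\SS^1$, which must be Lebesgue by unique ergodicity of the irrational rotation. So all invariant measures of $f$ have the \emph{same} projection $\mathrm{Leb}_{\SS^1}$; they differ only in their disintegration along the fibers $h^{-1}(\theta)$.

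**The main mechanism: disintegration and a minimizing/maximizing measure argument.** Disintegrate $\mu = \int_{\SS^1} \mu_\theta \, d\theta$ with $\mu_\theta$ supported on the fiber $h^{-1}(\theta)$. The dynamics $f$ maps the fiber over $\theta$ to the fiber over $\theta+\alpha$. Suppose toward a contradiction that the second coordinate of the rotation set is a nondegenerate interval $[a,b]$ with $a<b$; then there are ergodic invariant measures realizing rotation vectors $(\alpha,a)$ and $(\alpha,b)$ (extremal points of a segment are realized by ergodic measures — a standard fact used throughout this theory). The strategy I would pursue is to use these extremal measures together with the fiberwise structure to build an $f$-invariant set, or a subharmonic-type function on the fibers, violating minimality or the structure of the semiconjugacy. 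Concretely: consider the ``upper'' function $u(\theta) = \sup\{\pi_2\text{-coordinate of points realizing large forward averages near the top}\}$ and similarly a lower function; the key is to show these satisfy a rigidity under $f$ that, combined with $R_\alpha$-minimality downstairs, forces $a = b$. An alternative and perhaps cleaner route: invoke the known bounded-deviations results — since $\rho(F)$ is a segment of rational slope, in this family one expects (or can prove) uniformly bounded deviation in the fiber direction from the line $\mathbb{R}\cdot(0,1)$ through any rotation vector; but bounded deviation in the \emph{vertical} direction around \emph{two distinct} values $a$ and $b$ is contradictory unless $a=b$, because the set of points with vertical average near $a$ and the set with vertical average near $b$ would be separated by a uniform gap that is $f$-invariant up to bounded error, and projecting via $h$ (which is onto and intertwines with a minimal rotation) this gap cannot be maintained.

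**Where the difficulty lies.** The genuinely hard part is the third paragraph: converting ``the fibers can be topologically complicated'' (as \cite{JP} shows any potential counterexample must have) into a usable structure. One cannot assume the fibers are arcs or even connected. So the argument must work with arbitrary continua as fibers. I expect the real work to be a careful topological-dynamical argument on the space of fibers — essentially studying the skew-product-like structure $f$ over the minimal rotation $R_\alpha$ — and showing that a nontrivial vertical rotation interval would allow one to construct two disjoint, nonempty, closed, $f$-invariant ``regions'' (e.g. the closures of the basins of the extremal ergodic measures, suitably defined using the prime-end or accessibility structure of the fibers), contradicting minimality of $R_\alpha$ after projection, or contradicting the fact proved in \cite{JP}/\cite{Forcing} about where rational points can sit. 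Managing the lack of control on fiber topology, and making the ``uniform gap'' or ``invariant region'' argument rigorous in that generality, is the crux; everything in the first two paragraphs is essentially bookkeeping with the semiconjugacy and the definition of the rotation set.
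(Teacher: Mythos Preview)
Your first two paragraphs are correct bookkeeping: the semiconjugacy pins down one coordinate of every rotation vector, so $\rho(F)$ lies on a line of rational slope, and the problem reduces to showing the remaining coordinate is constant across invariant measures. The observation that every invariant measure pushes forward to Lebesgue on $\SS^1$ is also correct and relevant.

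But the third and fourth paragraphs do not contain a proof, and the mechanisms you sketch do not work. First, the bounded-deviations idea: the known results (D\'avalos, Kocsard, Le Calvez--Tal) give bounded deviations \emph{perpendicular} to the rotation segment, which here is exactly the direction already controlled by the semiconjugacy and yields nothing new. Bounded deviations \emph{along} the segment are generally false when the segment is nontrivial, so you cannot invoke them, and ``bounded deviation around two distinct values $a$ and $b$'' is not a contradiction --- different orbits are allowed to have different rotation vectors. Second, the ``two disjoint invariant regions'' idea fails because $f$ is not assumed minimal; the supports of the two extremal ergodic measures can perfectly well be disjoint closed $f$-invariant sets, and since both push forward to Lebesgue, both project onto all of $\SS^1$ under $h$, so no contradiction with minimality of $R_\alpha$ arises. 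Third, a small but telling slip: after passing to a horizontal extension (via the J\"ager--Passeggi result you cite), the fibers \emph{are} connected --- they are essential annular continua --- so ``one cannot assume the fibers are connected'' is wrong in the setting you should be working in. You correctly identify that the crux is unexecuted; the issue is that none of your proposed routes to it actually closes.

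The paper's argument is entirely different and genuinely geometric. One fixes a thin closed annulus $A$ containing several fibers, an essential loop $\gamma$, and points $x,y$ near $\gamma$ realizing the two extremal rotation vectors. Because only finitely many fibers can be ``wide'', for a set of times $n$ of density close to $1$ the iterate $\hat f^n(\gamma)$ stays $\epsilon$-close to both $\hat f^n(x)$ and $\hat f^n(y)$. A ``dragging lemma'' then shows that a subarc $I_n$ of $\hat f^n(\gamma)$ joining points near $\hat f^n(x)$ and $\hat f^n(y)$ must have winding number growing linearly in $n$ (the different horizontal speeds of $x$ and $y$ force the loop to wind). This forces the \emph{distortion} of every essential loop in $\hat f^n(A)$ to go to infinity, and an elementary pigeonhole estimate then shows the two boundary circles of $\hat f^n(A)$ come arbitrarily close. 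But these boundaries separate two distinct fibers of $h$, whose mutual distance is bounded below uniformly in $n$ by the semiconjugacy to a rigid rotation --- contradiction. The measure-theoretic/coboundary framework you set up plays no role; the proof is purely about the topology of loops and annuli in $\A$.
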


Using the previously mentioned results we find that case (i) in the Franks-Misiurewicz conjecture is true for minimal homeomorphisms:

\begin{thmII}\label{c.rotsetmin}
The rotation set of a lift of any minimal homeomorphism of $\T^2$ homotopic to the identity is either:
\begin{itemize}
\item[(i)] a single point of irrational coordinates, or
\item[(ii)] a segment with irrational slope containing no rational points.
\end{itemize}
\end{thmII}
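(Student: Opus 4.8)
The plan is to fix a lift $F$ of the minimal homeomorphism $f$ and argue by contradiction through a case analysis on the geometry of $\rho:=\rho(F)$. Recall that by Misiurewicz--Ziemian \cite{m-z} the set $\rho$ is convex and compact and coincides with the set of mean rotation vectors $\bigl\{\int_{\T^2}(F-\id)\,d\mu\bigr\}$, as $\mu$ ranges over the $f$-invariant probability measures; moreover, since $f$ is minimal it has no periodic points, so no iterate $f^q$ has a fixed point. The first step is to show that $\rho$ contains no rational point. Indeed, if $p/q\in\rho$ with $p\in\Z^2$ and $q\in\N$, choose an $f$-invariant measure $\mu$ with $\int_{\T^2}(F-\id)\,d\mu=p/q$; then $G:=F^q-p$ is a lift of $f^q$ with $\int_{\T^2}(G-\id)\,d\mu=0$, so Franks' fixed point lemma \cite{franks-reali} furnishes a fixed point of $G$, that is, a $q$-periodic point of $f$ --- impossible. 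In particular $\rho$ has empty interior.

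Consequently $\rho$ is either a single point or a non-trivial segment. If that segment had rational slope, then $f$ --- being a minimal homeomorphism with a rational-slope interval as rotation set --- would be an extension of an irrational rotation, by Kocsard's theorem \cite{Kocsard}, and then Theorem~I would force $\rho$ to be a singleton, a contradiction. Hence $\rho$ is either a single point $v$, which by the first step is not rational, or a non-trivial segment of irrational slope, which by the first step contains no rational point; in the latter case we are already in alternative (ii).

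It remains, when $\rho=\{v\}$, to show that both coordinates of $v$ are irrational. Suppose not: say $v_1=p/q\in\Q$. The first-coordinate displacement $\phi\colon\T^2\to\R$, given by $\phi(z)=\pi_1(F(\tilde z))-\pi_1(\tilde z)$ for any lift $\tilde z$ of $z$, is continuous and satisfies $\int_{\T^2}\phi\,d\mu=p/q$ for every invariant $\mu$. The key point is that $\phi-p/q$ has uniformly bounded Birkhoff sums --- bounded deviations in the direction $(1,0)$ for a pseudo-rotation whose rotation vector has rational first coordinate, a rigidity statement in the spirit of \cite{Kocsard, jager-linear}. Granting it, a Gottschalk--Hedlund argument over the minimal system $(\T^2,f)$ produces a continuous $\psi\colon\T^2\to\R$ with $\phi-p/q=\psi\circ f-\psi$, so that $H(z):=\pi_1(\tilde z)-\psi(z)\bmod 1$ is a well-defined continuous map $H\colon\T^2\to\T^1$, homotopic to the first projection --- hence surjective --- and satisfying $H\circ f=R_{p/q}\circ H$, where $R_{p/q}$ denotes rotation by $p/q$. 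But then the $H$-preimage of any (finite) $R_{p/q}$-orbit is a proper nonempty closed $f$-invariant subset of $\T^2$, contradicting minimality. Thus both coordinates of $v$ are irrational, placing us in alternative (i).

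This argument is essentially a synthesis, and its one substantial ingredient is Theorem~I, invoked in the second paragraph to collapse the rational-slope segments. Of the remaining inputs, Misiurewicz--Ziemian together with Franks' lemma dispose of every rational point that could lie in $\rho$ with no difficulty. The delicate step is the boundedness of deviations used in the third paragraph: an a priori sublinear but unbounded drift in the $(1,0)$-direction would not by itself contradict $\rho$ being the single point $v$, so this point genuinely requires the rigidity machinery for minimal maps and is where care is needed.
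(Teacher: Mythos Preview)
Your first two paragraphs are correct and coincide with the paper's (implicit) derivation: one uses Kocsard's result to see that a minimal map whose rotation set is a nontrivial segment of rational slope must be an extension of an irrational rotation, and then Theorem~I collapses the segment; together with the exclusion of rational points in $\rho$ via Franks-type realization (legitimate here because minimality gives non-wandering/full-support recurrence), this yields exactly the dichotomy the paper states. The paper gives no argument beyond this, so on the substance of how Theorem~II follows from Theorem~I your write-up matches.

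The third paragraph, however, has a genuine gap --- and in fact the assertion you are trying to prove there is false as stated. You claim that a minimal pseudo-rotation with $v_1=p/q$ has bounded deviations in the $e_1$-direction ``in the spirit of \cite{Kocsard,jager-linear}'', but Kocsard's theorem concerns \emph{nontrivial} intervals of rational slope, not singletons, and no such rigidity holds in the singleton case. A Furstenberg skew product $f(x,y)=(x+\alpha,\,y+\phi(x))$ with $\alpha$ irrational and $\phi\in C(\T,\R)$ of mean zero that is not an $L^2$-coboundary over $R_\alpha$ is minimal (ergodicity of the skew product with respect to Lebesgue forces unique ergodicity by Furstenberg, and the unique measure has full support), is homotopic to the identity, and has $\rho(F)=\{(\alpha,0)\}$; yet the deviations in the $e_2$-direction are $S_n\phi(x)$, which are unbounded (else Gottschalk--Hedlund would make $\phi$ a continuous coboundary). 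Swapping coordinates gives the same phenomenon for $v_1=0$. So the bounded-deviations step cannot be salvaged, and consequently the literal reading of item~(i) as ``both coordinates irrational'' is not what is provable; the correct and provable statement is that the single point is not in $\Q^2$, which you already obtained in your first paragraph. You should drop the third paragraph and state (i) accordingly.
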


Note that both cases are realized; the first one by minimal rotations, and the second by Avila's example.

In the case of diffeomorphisms, J. Kwapisz has shown that the possible existence of an example whose rotation set is an interval contained in a line of irrational slope having a rational point outside the interval is equivalent to the existence of an example with a non-trivial segment of rational slope containing no rational points \cite{kwapisz-apriori}. Therefore, we have the following corollary.

\begin{corollary*} For diffeomorphisms, case (ii) of the previous theorem can only hold if the supporting line of the segment contains no rational points.
\end{corollary*}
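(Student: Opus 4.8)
The plan is to deduce the corollary from Theorem~II together with Kwapisz's reduction \cite{kwapisz-apriori}, arguing by contradiction. Suppose $f$ is a diffeomorphism of $\T^2$ homotopic to the identity which falls in case (ii) of Theorem~II, so that the rotation set of a lift of $f$ is a non-trivial segment $I$ lying on a line $\ell$ of irrational slope and containing no rational point, and suppose in addition that $\ell$ passes through some $v\in\Q^2$; since $I$ has no rational point, we have $v\notin I$. Thus $f$ exhibits the first alternative in Kwapisz's dichotomy, and \cite{kwapisz-apriori} produces a $C^\infty$ diffeomorphism $g$ of $\T^2$, homotopic to the identity, whose rotation set is a \emph{non-trivial} segment $J$ of rational slope containing no rational point. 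It then remains to rule out the existence of such a $g$.

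Here I would first record that $g$ has no periodic points: a $q$-periodic orbit for a lift $G$, say $G^q(x)=x+p$ with $p\in\Z^2$, would place the rational vector $p/q$ into $\rho(G)=J$, contradicting that $J$ contains no rational point. Next I would bring in the structural input behind Theorem~II: by \cite{Kocsard} every minimal homeomorphism of $\T^2$ homotopic to the identity whose rotation set is a non-trivial segment of rational slope is an extension of an irrational rotation, and by Theorem~I the rotation set of any extension of an irrational rotation is a single point; hence $J$ cannot be the rotation set of a minimal diffeomorphism. The final step is to transfer this conclusion to the $g$ furnished by \cite{kwapisz-apriori}: if $g$ can be taken minimal — or, more generally, if one can guarantee that $g$ has bounded deviations in the direction transverse to $J$ — then Theorem~I applied to $g$ forces $J$ to be a singleton, contradicting that $J$ is non-trivial and completing the argument.

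The main obstacle is precisely this transfer: one must check that the diffeomorphism produced by Kwapisz's surgery lies within the scope of the available structural results, namely that it is minimal, or at least enjoys bounded deviations transverse to its rotation segment. Alternatively one could sidestep minimality by proving the periodic-point-free instance of the first step of J\"ager's program for rational slope — that a periodic-point-free toral diffeomorphism homotopic to the identity with rotation set a non-trivial segment of rational slope containing no rational point is an extension of an irrational rotation — after which Theorem~I closes the case verbatim. I expect this point, rather than any of the cited facts (which enter only as bookkeeping of \cite{kwapisz-apriori}, \cite{Kocsard} and Theorem~I), to be the one place where genuine work is needed.
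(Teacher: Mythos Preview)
Your approach is the same as the paper's: the paper simply cites Kwapisz's equivalence \cite{kwapisz-apriori} between the existence of a diffeomorphism whose rotation set is a segment on a line of irrational slope through a rational point and the existence of one whose rotation set is a segment of rational slope with no rational points, and says ``therefore'' the corollary holds. In other words, the paper also argues by passing from the hypothetical minimal $f$ in case~(ii) to a $g$ with rational-slope rotation set and then invoking Theorem~II (equivalently Theorem~I together with \cite{Kocsard}) to reach a contradiction.

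The point you flag as the ``main obstacle'' --- that the $g$ produced by Kwapisz must itself be minimal (or at least have bounded deviations) for Theorem~II/Theorem~I to apply --- is exactly the one nontrivial ingredient, and the paper leaves it entirely implicit in the citation. It is not a genuine gap: Kwapisz's passage between the two cases is a concrete renormalisation/return-map construction, and minimality is preserved under it; once one inspects \cite{kwapisz-apriori} this is immediate, and none of the alternative routes you outline (a periodic-point-free version of the first step of J\"ager's program, etc.) are needed. Two small remarks: the $g$ you obtain has the same regularity as $f$, not $C^\infty$ in general; and your detour through ``$g$ has no periodic points'' is unnecessary, since a rational-slope segment is already excluded outright for minimal maps by Theorem~II.
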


Finally, it should be mentioned that one of the main theorems in \cite{kwapisz-apriori} contains the previous corollary, and moreover states that rotation sets which are intervals of rational slope having no rational points cannot be realized by diffeomorphisms. Unfortunately, there is a critical flaw in the proof\footnote{as acknowledged by the author}, which uses some convoluted estimations relying in quasiconformality properties and extremal length.

\subsection{Comments on the proof of Theorem I}

In \cite{JP} it is proved that an extension $f$ of an irrational rotation having an interval as rotation set has a semiconjugacy to an irrational rotation so that every fiber is an essential annular continuum, and almost every fiber contains points realizing both extremal rotation vectors.

In order to prove Theorem I we develop some techniques concerning the geometry of essential loops which have the property of remaining under iteration close enough to two points having different rotation vectors (see Section \ref{sec:dragging}). In Section \ref{sec:freq} we show that one can choose a topological annulus $A$ which contains at least two fibers of the semiconjugacy and whose ``width'' remains small enough after most iterations by $f$. Applying the results from Section \ref{sec:dragging}, we are able to show that every essential loop in $f^n(A)$ will contain arcs whose \emph{winding number} becomes arbitrarily large with $n$. This in turn will imply that $A$ is increasingly \emph{distorted}, and as a consequence of this distortion we show that the two boundary circles of $f^n(A)$ contain points arbitrarily close to each other.
This leads to a contradiction since the (pointwise) distance between two different fibers remains bounded below by a constant under iterations, due to the semiconjugation to a rigid rotation.

\subsection*{Acknowledgments:} We would like to thank Sylvain Crovisier for the fruitful conversations related to this article.

\section{preliminaries} \label{sec:prelim}
We denote by $\pr_1\colon \R^2\to \R$ and $\pr_2\colon \R^2\to \R$ the projections onto the first and second coordinates, respectively, and we define $T_1(x,y) = (x+1,y)$, $T_2(x,y) = (x,y+1)$. We consider the open annulus $\A$ defined as $R/\langle T_1\rangle$, and we let $\tau \colon \R^2\to \A$ be the covering projection. The vertical translation $T_2$ induces a vertical translation on $\A$ which we still denote $T_2$, and we consider the torus $\T^2=\A/\langle T_2\rangle$ with covering projection $\theta\colon \A\to \T^2$. Note that the map $\pi = \tau\circ \theta$ is a universal covering of $\T^2$. All spaces are endowed by the metric induced by the euclidean metric in $\R^2$.

For a surface $S$, we denote by $\homeo_0(S)$ the space of homeomorphisms of $S$ isotopic to the identity. Any $f\in \homeo_0(\T^2)$ can be lifted (by the covering $\theta$) to a homeomorphism $\hat f\in \homeo_0(\A)$ which commutes with $T_2$, and $\hat f$ in turn lifts to a homeomorphism $F\colon \R^2\to \R^2$ with commutes with both $T_1$ and $T_2$, and which is also a lift of $f$ (by the covering $\pi$).

For convenience, let us denote by $\disp_F\colon \A\to \R$ the \emph{horizontal displacement function} associated to $F$, defined on $\A$ by $\disp_F(x) = \pr_1(F(\til x) - \til x)$ for any $\til x\in \theta^{-1}(x)$. Note that since $z\mapsto F(z)-z$ is $\Z^2$-periodic, this definition is independent on the choice of $\til x$, and moreover $\disp_F$ is $T_2$-periodic. In particular it is bounded.
It is useful to note that $\disp_{F^n}(x) = \sum_{k=0}^{n-1} \disp_F(\li f^k(x)) = \pr_1(F^n(\til x) - \til x)$ for any $\til x\in \theta^{-1}(x)$.

\subsection{Some topological definitions and facts}

An arc in $\A$ from $x$ to $y$ is a continuous map $\sigma \colon [a,b] \to \A$ such that $\sigma(a) = x$ and $\sigma(b) = y$. Two arcs are equivalent if one is a reparametrization of the other (preserving the endpoints). We identify equivalent arcs. The arc is simple if the map $\sigma$ may be chosen injective. In the case of a simple arc, we often use the same notation for $\sigma$ and the image of $\sigma$. A loop is an arc $\gamma$ whose two endpoints coincide. In that case we say that $\gamma$ is simple if there is a parametrization $\gamma\colon[a,b]\to \A$ which is injective on $[a,b)$. The loop $\gamma$ is essential its complement in $\A$ has two unbounded components.

An \emph{essential continuum} $E\subset \A$ is a continuum such that $\A\sm E$ has two unbounded connected components, which we denote $\mc U^+(E)$ and $\mc U^-(E)$ (where $\mc U^+$ is the one unbounded above and $\mc U^-$ is the one unbounded below). We say that $E$ is an \emph{essential annular continuum} if $\A \sm E$ has exactly two connected components, both of which are unbounded.

A continuum $C\subset \T^2$ is called a \emph{horizontal (annular) continuum} if each connected component of $\theta^{-1}(C)$ is an essential (annular) continuum.
Similarly, an open or closed (topological) annulus $A\subset \T^2$ is called horizontal if each connected component of $\theta^{-1}(A)$ contains an essential loop.

If $X,Y$ are two sets in $\A$ or $\T^2$, we write $d(X,Y) = \inf\{d(x,y) : x\in X, y\in Y\}$. When $X$ is a singleton we write $d(x,Y)$ instead of $d(\{x\}, Y)$. By $B_r(X)$ we denote the $r$-neighborhood of $X$, \ie the set $\{y : d(y,X)<r\}$. If $X,Y$ are compact we denote by $d_H(X,Y)$ the Hausdorff distance between the two sets, \ie the infimum of all numbers $\epsilon>0$ such that $X\subset B_\epsilon(Y)$ and $Y\subset B_\epsilon(X)$.
The Hausdorff distance is a complete metric.

Given two essential continua $C_1, C_2$ in $\A$, we write $C_1\prec C_2$ if $C_1\subset \mc U^-(C_2)$. This defines a partial order.
The following lemma is contained in \cite[Lemma 3.8]{JKP}.
\begin{lemma}\label{lem:JKP} If a sequence of essential continua $(C_k)_{k\in \N}$ is increasing in the partial order $\prec$, then there is an essential continuum $C$ such that $d_H(C_k, C) \to 0$ as $k\to \infty$. Moreover, $C=\bd \bigcup_{k\in \N} \mc U^-(C_k)$. A similar property holds for a decreasing sequence.
\end{lemma}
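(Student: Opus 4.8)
The plan is to construct the limiting continuum explicitly: put $U:=\bigcup_{k\in\N}\mc U^-(C_k)$ and define $C:=\bd U$, so that the asserted identity $C=\bd\bigcup_k\mc U^-(C_k)$ holds by construction and the content of the statement reduces to showing that $C$ is an essential continuum with $d_H(C_k,C)\to 0$. Two reductions come first. The decreasing case follows from the increasing one by applying it to the images of the $C_k$ under the orientation-reversing involution $(x,y)\mapsto(x,-y)$ of $\A$, which exchanges $\mc U^+$ with $\mc U^-$ and reverses $\prec$ (for a decreasing sequence the limit is then $\bd\bigcup_k\mc U^+(C_k)$). Also, the statement is to be read with the sequence bounded --- equivalently, with an essential continuum $C_*$ such that $C_k\prec C_*$ for all $k$ --- since otherwise $\bigcup_k\mc U^-(C_k)=\A$ and no limiting continuum exists.

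The structural fact I would establish is the monotonicity of the lower regions under $\prec$: if $C\prec C'$ then $\mc U^-(C)\subseteq\mc U^-(C')$. The point is that $C\subseteq\mc U^-(C')$ forces $C'\subseteq\mc U^+(C)$: the connected set $C'$ lies in a single complementary component of $C$; it cannot lie in a bounded one, which is inessential and hence has preimage in $\R^2$ a disjoint union of bounded sets, none of which can contain the $T_1$-invariant set $\tau^{-1}(C')$; and it cannot lie in $\mc U^-(C)$, for that would force $\mc U^+(C)=\mc U^+(C')$ and hence $\bd\mc U^+(C)\subseteq C\cap C'=\varnothing$, impossible since $\mc U^+(C)$ is a nonempty proper open subset of $\A$; and once $C'\subseteq\mc U^+(C)$, the set $\mc U^-(C)$ is a connected, unbounded-below subset of $\A\setminus C'$, so it lies in $\mc U^-(C')$. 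Consequently the $\mc U^-(C_k)$ form an increasing chain of open connected sets, $U$ is open and connected, $C_k\subseteq\mc U^-(C_{k+1})\subseteq U$ for every $k$, and $\mc U^+(C_*)\subseteq\mc U^+(C_k)$ gives $U\cap\mc U^+(C_*)=\varnothing$. Thus $\varnothing\neq U\neq\A$, so $C=\bd U\neq\varnothing$; since $C$ is disjoint both from $\mc U^-(C_1)\subseteq U$ and from $\mc U^+(C_*)$, it is contained in the compact set $\A\setminus(\mc U^-(C_1)\cup\mc U^+(C_*))$ and so is compact; and $U$ is a connected component of $\A\setminus C$, being open, disjoint from $C$, and closed in $\A\setminus C$ (since $\overline U\cap(\A\setminus C)=U$).

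The heart of the argument is the convergence $d_H(C_k,C)\to 0$, which I would prove by two parallel compactness arguments. First, $C_k\subseteq B_\epsilon(C)$ for all large $k$: otherwise there are $\epsilon>0$, indices $k_i\to\infty$ and points $x_{k_i}\in C_{k_i}$ with $d(x_{k_i},C)\geq\epsilon$; these points lie in a fixed compact set, so after passing to a subsequence $x_{k_i}\to x_*$, and $x_*\in\overline U\setminus C=U$ because $x_{k_i}\in C_{k_i}\subseteq U$ while $d(x_*,C)\geq\epsilon$; hence $x_*\in\mc U^-(C_{j_0})$ for some $j_0$, so for $i$ large $x_{k_i}\in\mc U^-(C_{j_0})\subseteq\mc U^-(C_{k_i})$, contradicting $x_{k_i}\in C_{k_i}$. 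Second, $C\subseteq B_\epsilon(C_k)$ for all large $k$: otherwise there are $\epsilon>0$, $k_i\to\infty$ and $c_{k_i}\in C$ with $d(c_{k_i},C_{k_i})\geq\epsilon$, and since $C$ is compact we may assume $c_{k_i}\to c_*\in C$, so $d(c_*,C_{k_i})\geq\epsilon/2$ for $i$ large; as $c_*\in\bd U$, choose $u\in U$ with $d(u,c_*)<\epsilon/4$, say $u\in\mc U^-(C_{j_0})$; then for $i$ large $u\in\mc U^-(C_{j_0})\subseteq\mc U^-(C_{k_i})$, while $c_*\notin U\supseteq\mc U^-(C_{k_i})$ and $c_*\notin C_{k_i}$, so $c_*$ lies in $\mc U^+(C_{k_i})$ or in a bounded component of $\A\setminus C_{k_i}$; but the connected set $B_{\epsilon/2}(c_*)$ misses $C_{k_i}$, hence lies in that same component, and it contains $u\in\mc U^-(C_{k_i})$ --- a contradiction.

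It remains to see that $C$ is an essential continuum. It is a continuum, being a Hausdorff limit of the continua $C_k$ inside a fixed compact set. It is essential: a path in $\A\setminus C$ joining a point far below $C_1$ to a point far above $C_*$ would keep positive distance from the compact set $C$, hence avoid $C_k$ for all large $k$ and join $\mc U^-(C_k)$ to $\mc U^+(C_k)$ inside $\A\setminus C_k$, contradicting the essentiality of $C_k$. Therefore the component $U$ of $\A\setminus C$ identified above is $\mc U^-(C)$, and $C=\bd\mc U^-(C)=\bd\bigcup_k\mc U^-(C_k)$. I expect the main obstacles to be the structural preliminaries --- the monotonicity of $\mc U^-$ under $\prec$ and the identification of $U$ as a complementary component of $C=\bd U$ --- and, within the convergence proof, the step of locating the limit point $c_*$ in the correct complementary region of $C_{k_i}$; once these are in place, the remaining work is a routine compactness argument together with the stability of essentiality under Hausdorff limits.
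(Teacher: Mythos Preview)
The paper does not prove this lemma itself; it cites \cite[Lemma 3.8]{JKP}. Your self-contained argument is correct: you identify the limit as $C=\bd\bigcup_k\mc U^-(C_k)$, establish monotonicity of the regions $\mc U^-$ under $\prec$, verify Hausdorff convergence via two compactness arguments, and check essentiality through stability under Hausdorff limits. You also rightly flag the implicit boundedness hypothesis (which is satisfied in the paper's only application of the lemma, in the proof of Lemma~\ref{lem:tec1}).

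One minor point: when excluding the possibility that $C'$ lies in a bounded component $V$ of $\A\setminus C$, you assert that $\tau^{-1}(V)$ is a disjoint union of \emph{bounded} subsets of $\R^2$. That the components of $\tau^{-1}(V)$ are pairwise disjoint follows from $V$ being inessential, but their boundedness in $\R^2$ is not immediate from $V$ being bounded in $\A$. A cleaner route avoids lifts altogether: the set $C\cup\mc U^+(C)\cup\mc U^-(C)$ is connected (since $\bd\,\mc U^\pm(C)\subseteq C$), disjoint from $C'\subset V$, and unbounded both above and below, hence cannot lie in a single component of $\A\setminus C'$ --- contradicting the essentiality of $C'$.
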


Given an essential annular continuum $A\subset \A$, we define its \emph{upper width} as
$$\uwidth(A) = \sup\{d_H(C_1, C_2) : C_1, C_2 \text { are essential continua in $A$}\}$$
and its \emph{lower width} as
$$\lwidth(A) = \sup \{d(C_1, C_2) : C_1, C_2 \text { are essential continua in $A$}\}.$$
If $A$ is a closed topological annulus, one can easily verify that $\lwidth(A) = d(\bd^+ A, \bd^- A)$, where $\bd^+A$ and $\bd^-A$ are the two boundary components of $A$, and $\uwidth(A) = d_H(\bd^+ A, \bd^- A)$.
Note that we used the infimun distance in the first case and the Hausdorff distance for the second case.

We remark that an equivalent definition of $\uwidth(A)$ is as the smallest number $r>0$ such that for every essential continuum $C\subset A$ one has $A\subset B_\epsilon(C)$.
Note also that if $A\subset A'$ then $\uwidth(A)\leq \uwidth(A')$ and $\lwidth(A)\leq \lwidth(A')$.

If $A\subset \T^2$ is a horizontal annular continuum, we define its upper and lower width as the upper and lower width of any lift of $A$ to $\A$, respectively (and this is independent of the choice of the lift).

\section{Extensions of irrational rotations}
\label{sec:freq}

Let us say that $h\colon \T^2\to \SS^1$ is a \emph{horizontal map} if $h$ is continuous, surjective, and $h^{-1}(t)$ is a horizontal annular continuum for each $t\in \SS^1$.
Given $f\in \homeo_0(\T^2)$, we say that $f$ is a \emph{horizontal extension of an irrational rotation} if there exists a horizontal map $h$ such that $hf = Rh$, where $R$ is an irrational rotation of $\SS^1$.

We will use the following result due to T. J\"ager and the second author of this article \cite{JP}:
\begin{theorem} If $f$ is an extension of an irrational rotation, then $f$ is topologically conjugate to a horizontal extension of an irrational rotation.
\end{theorem}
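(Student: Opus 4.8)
The plan is to start from an arbitrary topological semiconjugacy $g\colon\T^2\to\SS^1$ with $gf=Rg$, $R$ an irrational rotation, and to replace it step by step by a semiconjugacy with the required fiber structure; the only point at which $f$ itself must be altered is a single conjugation by a linear automorphism of $\T^2$, which accounts for the word ``conjugate'' in the statement.

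\emph{Normalizing the semiconjugacy.} First, $g$ cannot be null-homotopic: otherwise it lifts to a bounded continuous $\tilde g\colon\T^2\to\R$, and $gf=Rg$ yields $\tilde g\circ f^n=\tilde g+n\alpha'$ with $\alpha'$ of irrational fractional part, contradicting boundedness of $\tilde g$. So $g_*\colon\Z^2\to\Z$ is a nonzero homomorphism; writing $g_*=m\lambda$ with $m\ge 1$ and $\lambda$ primitive, and choosing a basis $\{w,v\}$ of $\Z^2$ with $\lambda(w)=0$, $\lambda(v)=1$, conjugation of $f$ by the torus automorphism sending the standard basis to $\{w,v\}$ keeps $f$ in $\homeo_0(\T^2)$ and still an extension of $R$, and normalizes $g$ so that $g_*(1,0)=0$, $g_*(0,1)=m$. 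Since then $g_*(\pi_1\T^2)=m\Z$, $g$ factors through the $m$-fold self-covering $q\colon\SS^1\to\SS^1$ as $g=q\circ g'$, and, $\T^2$ being connected with $\deck(q)$ finite, one gets $g'\circ f=R'\circ g'$ for an irrational rotation $R'$, with $g'_*(1,0)=0$, $g'_*(0,1)=1$. We work with $g'$, $R'$ from now on.

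\emph{Passing to the annulus.} Because $(g'\circ\theta)_*$ annihilates $\pi_1\A$, the map $g'\circ\theta\colon\A\to\SS^1$ lifts to a continuous $G\colon\A\to\R$ with $G\circ T_2=G+1$; boundedness of $G$ on $\SS^1\times[0,1]$ together with this relation forces $G(x,y)\to\pm\infty$ uniformly as $y\to\pm\infty$, so every $G^{-1}(s)$ is compact and separates the two ends of $\A$. If $\hat f$ is the lift of $f$ to $\A$ commuting with $T_2$, then $g'\circ\theta\circ\hat f=R'\circ g'\circ\theta$ forces $G\circ\hat f=G+c$ for one constant $c$ with irrational fractional part; in particular $\hat f$ preserves the ends of $\A$ and carries $G^{-1}(s)$ onto $G^{-1}(s+c)$.

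\emph{Building the decomposition (the main obstacle).} It remains to extract from $G$ an upper semicontinuous decomposition of $\A$ into essential annular continua, invariant under $T_2$ and $\hat f$, parametrized monotonically by $\R$ and shifted by $c$ under $\hat f$. The naive candidate is the ``annular filling'' $\A\setminus(\mc U^-_s\cup\mc U^+_s)$ of $G^{-1}(s)$, where $\mc U^\pm_s$ are the components of $\A\setminus G^{-1}(s)$ meeting the two ends: its complement has exactly two unbounded components, but the set itself need not be connected, and these sets need not partition $\A$, because a level set $G^{-1}(s)$ may contain inessential components (e.g. a circle bounding a disk on which $G$ has a strict interior extremum); such a ``bump'' sits at $G$-level $s$ but topologically belongs to the essential fiber at the ambient level around it. The remedy is a monotonization: one works with the nested families $V^-_s$ (the component of $\{G<s\}$ meeting the lower end) and $V^+_s$ (the component of $\{G>s\}$ meeting the upper end), increasing and decreasing in $s$ respectively, and builds from them a continuous monotone surjection $\Phi\colon\A\to\R$ with $\Phi\circ T_2=\Phi+1$ and $\Phi\circ\hat f=\Phi+c$, whose level sets $\Phi^{-1}(s)$ are genuine (connected) essential annular continua that absorb all the inessential part of the $G^{-1}(s)$ and exhaust $\A$. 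This is essentially a Moore-type theorem for the annulus, with Lemma~\ref{lem:JKP} providing the compactness of monotone limits of essential continua that underlies it; the real difficulty, as I see it, lies exactly here --- in arranging the ``absorption'' so that the quotient of $\A$ is once more a decomposition into connected essential annular continua carrying a continuous circle-valued quotient map. Granting this, $\exp(2\pi i\Phi)$ is $T_2$-invariant and descends to a horizontal map $h\colon\T^2\to\SS^1$, and $\Phi\circ\hat f=\Phi+c$ gives $h\circ f=R_c\circ h$ with $R_c$ the rotation by $c\bmod 1$. Hence $f$, after the initial linear conjugacy, is a horizontal extension of an irrational rotation, as claimed.
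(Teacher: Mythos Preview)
The paper does not prove this theorem: it is quoted as a result ``due to T.~J\"ager and the second author'' and cited from \cite{JP}, with no argument supplied in the present article. There is therefore no proof in the paper to compare your attempt against.

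On your proposal itself: the normalization steps are correct. The non-nullhomotopy of $g$, the linear conjugacy putting $g_*$ into the form $(0,m)$, the factorization through the $m$-fold cover of $\SS^1$ to reach $g'_*=(0,1)$, and the lift to $G\colon\A\to\R$ satisfying $G\circ T_2=G+1$ and $G\circ\hat f=G+c$ all go through as you describe. But the substance of the theorem is exactly your final step, and you do not prove it: you name the nested families $V_s^\pm$, assert that one ``builds from them'' a continuous $\Phi$ whose level sets are connected essential annular continua, and then write ``Granting this''. Everything before that point is bookkeeping; the monotonization is the theorem. As written, the proposal correctly isolates the difficulty but leaves the core argument open.

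If you want to complete it along the lines you sketch, you would need to show (say with $\Phi(x)=\inf\{s:x\in V_s^-\}$) that $\Phi$ is continuous, that each $\Phi^{-1}(t)$ is connected and separates the two ends of $\A$ with exactly two complementary components, and that the equivariance $\Phi\circ T_2=\Phi+1$, $\Phi\circ\hat f=\Phi+c$ survives the construction. None of these is automatic; the connectedness of the fibers in particular needs a genuine planar-topology argument (of the sort underlying Lemma~\ref{lem:JKP}), and is where the work in \cite{JP} actually lies.
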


As mentioned in the introduction, the proof of Theorem I is based in showing that if we iterate certain annular neighborhood $A$ of a fiber of the horizontal semiconjugacy, then the boundary components get arbitrarily close. A proof of this fact would be easier provided we know that every fiber of the semiconjugacy has small width (bounded above by the continuity module of $\frac{1}{4}$ for $f$), which would be true for instance if every fiber was a \emph{circloid}\footnote{A minimal annular continuum with respect to the inclusion}. Unfortunately we can not ensure this fact, instead we will strongly use that only finitely many of the fibers can have large width, which in turn will imply that under iterations the boundary components of $A$ are in a small Hausdorff distance for a high frequency of iterations.

For the remainder of this section, fix a horizontal extension of an irrational rotation $f\in \homeo_0(\T^2)$, and let $h$ be the horizontal map such that $hf = Rh$ where $R$ is an irrational rotation. We also fix a lift $\li f\in \homeo_0(\A)$ of $f$ and a lift $F\colon \R^2\to \R^2$ of $\li f$.

Our main purpose in this section is to show the following result, which enumerates the key properties which will be used in the proof of our main theorem. Recall that the lower density of a set $G\subset \N$ is defined as $\liminf_{n\to \infty} \card\{k\in G : k\leq n\}/n$. We state the lemma in the annulus $\A$ since we will work in that setting later.

\begin{lemma}\label{lem:todo} Suppose that $\rho(F) = [\rho^-, \rho^+]\times \{\alpha\}$. Then, given $\delta>0$ and $\epsilon>0$, there exists a closed essential topological annulus $A\subset \A$, an essential simple loop $\gamma\subset \A \sm A$, two points $x,y\in \A\sm A$, a set $G\subset \Z$ and $B = B(f)>0$ such that
\begin{itemize}
\item[(1)] $\disp_{F^n}(x)/n \to \rho^-$ and $\disp_{F^n}(y)/n \to \rho^+$ as $n\to \infty$;
\item[(2)] If $n\in G$, then $d(\li f^n(x), \li f^n(\gamma)) < \epsilon$ and $d(\li f^n(y), \li f^n(\gamma))<\epsilon$;
\item[(3)] The lower density of $G$ is at least $1-\delta$;
\item[(4)] $\diam(\pr_2(\li f^n(A)))\leq B$ for all $n\in\N$;
\item[(5)] $A$ separates $\{x,y\}$ from $\gamma$ in $\A$, and $$\sup_{n\in \N} \lwidth(\li f^n(A)) > 0.$$
\end{itemize}
\end{lemma}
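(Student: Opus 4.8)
The plan is to carry everything to the infinite annulus. Since $h\circ\theta\colon\A\to\SS^1$ is constant on each connected component of a fibre of $h$, and these components are essential annular continua of $\A$, the map $h\circ\theta$ is trivial on $\pi_1(\A)$ and hence lifts to a continuous $\bar h\colon\A\to\R$; normalise it so that $\bar h\circ T_2=\bar h+1$ and, using $hf=Rh$, so that $\bar h\circ\li f=\bar h+\rho$ for a suitable irrational lift $\rho$ of the rotation number of $R$. One checks that each level set $E_s:=\bar h^{-1}(s)$ is an essential annular continuum with $E_{s+1}=T_2E_s$ and $\mc U^-(E_s)=\bar h^{-1}((-\infty,s))$, that $E_s\prec E_{s'}$ for $s<s'$, and that $\bar h-\pr_2$ is $T_2$-invariant, hence descends to a bounded map on $\T^2$, so that $|\bar h-\pr_2|\le M$ for some $M=M(f)$. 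In particular, if $A\subset\bar h^{-1}([a,b])$ with $b-a<1$ then $\li f^n(A)\subset\bar h^{-1}([a+n\rho,b+n\rho])$, whose second projection has diameter at most $(b-a)+2M$, which will yield (4) with $B(f):=1+2M$.

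The heart of the argument is the finiteness statement: for every $\eta>0$ the set $\Phi_\eta:=\{t\in\SS^1:\uwidth(h^{-1}(t))\ge\eta\}$ is finite. For this I would argue that if $\uwidth(E_s)\ge\eta$ then, choosing $p$ on one of the two boundary continua $\partial^\pm E_s$ at distance $\ge\eta$ from the other, the disc $B_{\eta/2}(p)$ misses that boundary; since it is a genuine disc meeting $E_s$ but not $\bd\mc U^-(E_s)$, it is contained in $\{\bar h\ge s\}$, and $|\bar h-\pr_2|\le M$ confines its second coordinate to an interval of length $\le\eta+2M$ about $s$. A packing estimate for such discs with $(\eta/4)$-separated centres inside a fundamental domain $\{s\le\bar h<s+1\}$, whose image in $\T^2$ has area $1$, together with the uniform continuity of $\bar h$, bounds $\card\Phi_\eta$. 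I expect this packing bookkeeping — tracking how the discs associated to different values $s$ overlap and are shifted vertically — to be the main obstacle.

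Given this, write $\Phi_{\eta_0}=\{\sigma_1,\dots,\sigma_k\}$. On each closed arc of $\SS^1\smallsetminus\Phi_{\eta_0}$ the map $\sigma\mapsto h^{-1}(\sigma)$ is continuous for the Hausdorff metric — upper semicontinuity of fibres gives continuity in both directions at points where $\uwidth<\eta_0$, and at an endpoint $\sigma_i$ the one-sided limits exist and equal $\partial^\pm$ of the fat fibre there (this is where Lemma~\ref{lem:JKP} is used) — hence it is uniformly continuous on that compact arc. It follows that there are $\eta_0,\eta_1>0$, depending only on $\epsilon$ and $f$, such that whenever $a<b$, $b-a<\eta_1$ and $\uwidth(E_\sigma)<\eta_0$ for all $\sigma\in[a,b]$, one has $d_H(E_a,E_b)<\epsilon/4$; since any essential continuum inside $\bar h^{-1}([a,b])$ lies, in the order $\prec$, between $\partial^-E_a$ and $\partial^+E_b$, a short argument (also using $d_H(E_\sigma,\partial^\pm E_\sigma)\le\uwidth(E_\sigma)$) shows it is $d_H$-close to both, whence $\uwidth(\bar h^{-1}([a,b]))<\epsilon$ for such $[a,b]$.

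It remains to assemble the objects. Put $k=\card\Phi_{\eta_0}$ and fix $0<\kappa<\min\{\eta_1,\delta/(2k)\}$. By \cite{JP}, almost every fibre of $h$ contains points realising both extremal rotation vectors; choose such a $t_0$, a lift $E_{s_0}$ of $h^{-1}(t_0)$, and $x,y\in E_{s_0}$ with $\disp_{F^n}(x)/n\to\rho^-$ and $\disp_{F^n}(y)/n\to\rho^+$, which is (1). In the successive open annular regions $\bar h^{-1}((s_0+\tfrac{\kappa}{8},s_0+\tfrac{3\kappa}{8}))$, $\bar h^{-1}((s_0+\tfrac{3\kappa}{8},s_0+\tfrac{5\kappa}{8}))$, $\bar h^{-1}((s_0+\tfrac{5\kappa}{8},s_0+\tfrac{7\kappa}{8}))$ pick $\prec$-increasingly simple essential loops $\gamma',\gamma'',\gamma$, and let $A$ be the closed topological annulus bounded by $\gamma'$ and $\gamma''$. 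Then $A\subset\bar h^{-1}([s_0,s_0+\tfrac{7\kappa}{8}])$, giving (4); the points $x,y\in E_{s_0}$ lie below $A$ and $\gamma$ lies above $A$, so $x,y\in\A\smallsetminus A$, $\gamma\subset\A\smallsetminus A$, and $A$ separates $\{x,y\}$ from $\gamma$, while $\lwidth(A)=d(\gamma',\gamma'')>0$, giving (5). Set $G=\{n\in\Z:\uwidth(h^{-1}(\sigma))<\eta_0\text{ for every }\sigma\in R^n([t_0,t_0+\tfrac{7\kappa}{8}])\}$; then $\Z\smallsetminus G=\{n:R^nt_0\in Z\}$ where $Z$ is a union of $k$ arcs of length $\tfrac{7\kappa}{8}$, so by unique ergodicity of $R$ its density equals $\mu(Z)\le k\kappa<\delta/2$, which is (3). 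Finally, for $n\in G$ the region $\bar h^{-1}([s_0+n\rho,s_0+n\rho+\tfrac{7\kappa}{8}])=\li f^n(\bar h^{-1}([s_0,s_0+\tfrac{7\kappa}{8}]))$ contains $\li f^n(x),\li f^n(y)\in E_{s_0+n\rho}$ and the essential loop $\li f^n(\gamma)$, and has upper width $<\epsilon$ by the previous step; by the characterisation of $\uwidth$ recalled in Section~\ref{sec:prelim} it is therefore contained in $B_\epsilon(\li f^n(\gamma))$, so $d(\li f^n(x),\li f^n(\gamma))<\epsilon$ and $d(\li f^n(y),\li f^n(\gamma))<\epsilon$, which is (2).
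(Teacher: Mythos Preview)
Your overall strategy coincides with the paper's: lift $h$ to a map $\bar h\colon\A\to\R$ (the paper's $H$), prove that only finitely many fibers have upper width $\geq\eta$, use unique ergodicity of $R$ for the density bound on $G$, and assemble $\gamma$, $A$, $x$, $y$ from the fiber bands. Two points, however, separate your argument from the paper's.

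\textbf{The finiteness of $\Phi_\eta$.} Your packing argument is incomplete, and the obstacle you flag is real: the discs $B_{\eta/2}(p_s)$ attached to different values $s$ need not be disjoint, since the containment $B_{\eta/2}(p_s)\subset\{\bar h\geq s\}$ does not prevent the disc from reaching far into $\{\bar h>s'\}$ for $s'>s$, so an area bound on a fundamental domain does not directly cap their number. The paper sidesteps this entirely. Its Lemma~\ref{lem:tec1} shows, using only the monotone Hausdorff convergence of Lemma~\ref{lem:JKP} (which you already invoke), that every $t\in\SS^1$ has a punctured neighborhood $I_t\setminus\{t\}$ in which all closed subintervals $I$ satisfy $\uwidth(h^{-1}(I))<\epsilon$; finiteness of $\Phi_\eta$ follows immediately since it has no accumulation point. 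This is both simpler and yields the stronger statement that the full preimage of a short interval (not just a single fiber) has small upper width, which is exactly what item~(2) needs.

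\textbf{Item (5).} As literally stated the condition is $\sup_n\lwidth(\li f^n(A))>0$, and your observation $\lwidth(A)=d(\gamma',\gamma'')>0$ certainly gives that. But the paper's proof establishes $\inf_n\lwidth(\li f^n(A))>0$, and this is what is actually used to derive a contradiction in the proof of Theorem~I (where one shows $\lwidth(\li f^n(A))\to 0$). The paper obtains this uniform lower bound by arranging that $A\supset H^{-1}(I)$ for a nontrivial closed interval $I$ and then invoking Lemma~\ref{lem:lw-min}: the endpoints of $R^n(I)$ stay a fixed distance apart, so by uniform continuity of $h$ the corresponding fibers stay a fixed distance apart. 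Your annulus $A$, bounded by loops in the disjoint open bands $\bar h^{-1}((s_0+\tfrac{\kappa}{8},s_0+\tfrac{3\kappa}{8}))$ and $\bar h^{-1}((s_0+\tfrac{3\kappa}{8},s_0+\tfrac{5\kappa}{8}))$, does contain $\bar h^{-1}([c',c''])$ for some nontrivial interval (take $c'=\max\bar h(\gamma')$, $c''=\min\bar h(\gamma'')$), so the same argument goes through; but you should state it, since without the uniform bound the lemma is useless downstream.

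Minor remark: the paper obtains $x,y$ by the softer route of realising extremal rotation vectors by ergodic measures \cite{m-z} and using that the orbit of any fiber band covers $\T^2$; your appeal to the stronger result of \cite{JP} (almost every fiber contains realising points) is a legitimate shortcut.
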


Before proceeding to its proof, we need some results about the fibers of the map $h$.
Note that the family $\mc F$ of all fibers of $h$ is a decomposition of $\T^2$ into horizontal annular continua. From the continuity of $h$ follows that $\mc F$ is a \emph{upper semicontinuous decomposition}: if $C_n\in \mc F$ is a sequence of fibers such that $C_n\to C$ in the Hausdorff topology, then $C\subset C'$ for some $C'\in \mc F$. We also note that $h$ lifts to a map $H\colon \A \to \R$ whose fibers are the lifts of fibers of $h$, and choosing the orientation of $\R$ adequately we have that $H^{-1}(x) \prec H^{-1}(y)$ if and only if $x< y$.
Finally we remark that due to the fiber structure of $h$, whenever $I\subset \SS^1$ is an open interval, its preimage $A = h^{-1}(I)$ is an open topological annulus, and when $I\subset \SS^1$ is a closed interval $h^{-1}(I)$ is a horizontal annular continuum.

\begin{lemma}\label{lem:tec1} For each $\epsilon>0$ and $t\in \SS^1$ there exists a neighborhood $I_t$ of $t$ such that whenever $I\subset I_t\sm \{t\}$ is a closed interval one has $\uwidth(h^{-1}(I))< \epsilon$.
\end{lemma}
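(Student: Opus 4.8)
The plan is to derive this from upper semicontinuity of the fiber decomposition together with a compactness argument. The key point is that the fiber $C_t = h^{-1}(t)$ is a \emph{single} horizontal annular continuum, and nearby fibers cannot be much bigger than $C_t$ from the ``outside'' — any excess must collapse into $C_t$ in the limit. First I would fix $\epsilon>0$ and $t\in\SS^1$ and argue by contradiction: suppose no such neighborhood $I_t$ exists. Then there is a sequence of closed intervals $I_k$ with $I_k\to\{t\}$ (in the sense that $\diam(I_k\cup\{t\})\to 0$) but $t\notin I_k$, such that $\uwidth(h^{-1}(I_k))\geq\epsilon$ for every $k$. Using the characterization of $\uwidth$ recalled in the preliminaries, for each $k$ there are essential continua $C_k^1, C_k^2\subset h^{-1}(I_k)$ with $d_H(C_k^1,C_k^2)\geq\epsilon$; lifting to $\A$, we get essential continua $\til C_k^i$ in a lift of $h^{-1}(I_k)$, and after applying a suitable deck transformation $T_2^{m_k}$ we may assume these lifts stay in a bounded region (they meet a fixed fundamental domain of the $T_2$-action since $h$ is $T_2$-periodic), so the sequences are precompact in the Hausdorff topology.

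Next I would pass to a subsequence so that $\til C_k^1\to D^1$ and $\til C_k^2\to D^2$ in the Hausdorff topology, with $d_H(D^1,D^2)\geq\epsilon$, hence $D^1\neq D^2$. Since $H$ is continuous and $H(\til C_k^i)\subset$ (a lift of) $I_k$ which shrinks to a lift of $\{t\}$, both limits $D^1$ and $D^2$ are contained in a single lift of $C_t=h^{-1}(t)$. Now I need to know that $D^1$ and $D^2$ are themselves essential continua, not just continua: a Hausdorff limit of essential continua that remains connected and stays bounded horizontally need not be essential in general, but here one can use Lemma 2.3 (the monotone-convergence lemma from \cite{JKP}) by first replacing the $\til C_k^i$ with a monotone sub-subsequence in the order $\prec$ — or, more directly, observe that an essential continuum must separate the two ends of $\A$, and a Hausdorff limit of sets each separating the ends, all contained in a fixed bounded strip, still separates the ends, hence is essential. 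Either way, $D^1, D^2$ are two \emph{distinct} essential continua contained in a single lift $\widetilde{C_t}$ of the fiber $C_t$.

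Finally, this contradicts the fact that $C_t$ is a horizontal \emph{annular} continuum of zero width in the relevant sense — wait, that is exactly what we cannot assume. The actual contradiction is subtler: $\widetilde{C_t}$ is an essential annular continuum (its complement has exactly two components), so $\uwidth(\widetilde{C_t})$ is finite but could be positive. So the contradiction must instead come from the fact that we can make the neighborhood small at the level of a \emph{fixed} fiber: the resolution is that $D^1$ and $D^2$ being essential continua inside $\widetilde{C_t}$ at Hausdorff distance $\geq\epsilon$ is not by itself absurd. Therefore the correct argument must use upper semicontinuity more carefully, choosing $I_t$ adapted to $C_t$: the hard part — and the step I expect to be the main obstacle — is precisely to show that for a closed interval $I$ on \emph{one side} of $t$ and close to $t$, the annular continuum $h^{-1}(I)$ lies in a small neighborhood of a \emph{single} fiber, using that all fibers $h^{-1}(s)$ for $s\in I$ are squeezed between $h^{-1}(\min I)\prec\cdots\prec h^{-1}(\max I)$ (on the $\A$ level) and all converge Hausdorff to $C_t$ as $I\downarrow\{t\}$ by upper semicontinuity; the upper width of $h^{-1}(I)$ is then controlled by $d_H(h^{-1}(\min I), h^{-1}(\max I))$, and one shows this $\to 0$. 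I would carry out this last estimate by noting that an essential continuum in $h^{-1}(I)$ lies between the two ``extreme'' fibers $h^{-1}(\min I)$ and $h^{-1}(\max I)$ in the order $\prec$, so by Lemma 2.3 its Hausdorff distance to $C_t$ is bounded by $\max\{d_H(h^{-1}(\min I),C_t), d_H(h^{-1}(\max I),C_t)\}$, which is the continuity of the fiber map on each side and tends to $0$ as $I$ shrinks to $t$ — here the ``one-sided'' restriction $I\subset I_t\sm\{t\}$ is what makes $h^{-1}(I)$ a genuine (thin) annulus rather than jumping across the possibly fat fiber $C_t$ itself.
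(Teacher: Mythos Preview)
Your setup matches the paper's: argue by contradiction, obtain intervals $J_n\to t$ with $t\notin J_n$, and essential continua $C_n^1,C_n^2\subset H^{-1}(J_n)$ in $\A$ with $d_H(C_n^1,C_n^2)\geq\epsilon$. You also correctly diagnose that merely knowing the Hausdorff limits $D^1,D^2$ lie inside the (possibly fat) fiber $\widetilde{C_t}$ is not a contradiction. But at exactly this point you walk past the key step and then head into an approach with a real gap.

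The missing observation is this. After passing to a subsequence so that the $J_n$ are monotone (say increasing toward $t$), \emph{both} sequences $(C_n^1)$ and $(C_n^2)$ are $\prec$-increasing. Lemma~\ref{lem:JKP} does more than guarantee that the limits are essential continua: it \emph{identifies} the limit as $\bd\bigl(\bigcup_n \mc U^-(C_n^i)\bigr)$. Since $H^{-1}((-\infty,\min J_n))\subset \mc U^-(C_n^i)\subset H^{-1}((-\infty,\max J_n])$ and $\min J_n,\max J_n\to t$, one has $\bigcup_n \mc U^-(C_n^1)=\bigcup_n \mc U^-(C_n^2)=H^{-1}((-\infty,t))$, independently of $i$. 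Hence $C^1=C^2$, forcing $d_H(C_n^1,C_n^2)\to 0$, the desired contradiction. You actually mention taking a $\prec$-monotone sub-subsequence and invoking this lemma, but only to conclude essentiality of the limits; the real payoff is that the two limits coincide.

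By contrast, your final paragraph tries to control $\uwidth(h^{-1}(I))$ via distances to the fixed fiber $C_t$, and this does not work. First, bounding $d_H(C,C_t)$ for every essential $C\subset h^{-1}(I)$ does not bound $\uwidth(h^{-1}(I))$, precisely because $\uwidth(C_t)$ itself may be large; two essential continua can both be Hausdorff-close to $C_t$ yet far from each other. Second, the ``one-sided continuity'' you invoke, $d_H(h^{-1}(s),C_t)\to 0$ as $s\to t$, is not a consequence of upper semicontinuity and is false whenever $C_t$ has positive upper width (what is true is $d_H(h^{-1}(s),\bd H^{-1}((-\infty,t)))\to 0$ as $s\uparrow t$, which is exactly the content of the argument above). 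So the route you propose at the end is the wrong one; the fix is simply to use the explicit description of the limit in Lemma~\ref{lem:JKP}.
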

\begin{proof}
It suffices to prove the analogous claim on $\A$, \ie for each $t\in \R$ there exists a neighborhood $I_t$ of $t$ such that whenever $I\subset I_t\sm \{t\}$ is a closed interval one has $\uwidth(H^{-1}(I))< \epsilon$. Suppose this is not the case. Then there exists a sequence $J_n$ of closed intervals disjoint from $t$, converging to $t$, such that $\uwidth(H^{-1}(J_n)) \geq \epsilon$. For each $J_n$ we may find two essential continua $C_n^1, C_n^2\subset J_n$ such that $d_H(C_n^1, C_n^2) \geq \epsilon$. Passing to a subsequence we may assume that the intervals $J_n$ are either increasing or decreasing. We assume the former case, as the other case is analogous. This implies that both sequences $(C_n^i)_{n\in \N}$ are increasing in the order $\prec$. Thus by Lemma \ref{lem:JKP} we have $d_H(C_n^i, C^i) \to 0$ where  $C^i := \bd U^-_i$ and $U^-_i = \bigcup_{k\in \N} \mc U^-(C_k^i)$. But one easily verifies that $U^-_i = H^{-1}((-\infty, t])$, so $C^1 = C^2$. This implies that $d_H(C_n^1, C_n^2) \to 0$ as $n\to\infty$, a contradiction.
\end{proof}

\begin{remark} We note that as a consequence of the previous lemma, the set of all fibers of $h$ of positive upper width is greater than a given $\epsilon>0$ must be finite. Indeed the lemma implies that the set $\{t\in\SS^1 : \uwidth(h^{-1}(t)) \geq \epsilon\}$ has no accumulation points.
\end{remark}

\begin{lemma}\label{lem:density} Given $\epsilon>0$ and $\delta>0$, there exists $\eta>0$ such that for any closed interval $I\subset \SS^2$ of length smaller than $\eta$, if $A=h^{-1}(I)$ the set $\{n \in \N : \uwidth(f^n(A)) < \epsilon \}$ has lower density at least $1-\delta$.
\end{lemma}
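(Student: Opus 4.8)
The plan is to reduce the statement about upper width after iteration to an averaging argument over the orbit of the base rotation $R$, using the fact (from the remark following Lemma \ref{lem:tec1}) that only finitely many fibers of $h$ have large upper width. First I would fix $\epsilon>0$ and $\delta>0$. Since $f$ is a horizontal extension, $f^n(A) = f^n(h^{-1}(I)) = h^{-1}(R^n I)$, so controlling $\uwidth(f^n(A))$ amounts to controlling $\uwidth(h^{-1}(R^n I))$, which by monotonicity of the width under inclusion is governed by how $R^n I$ sits relative to the ``bad'' fibers. Let $F_\epsilon = \{t\in\SS^1 : \uwidth(h^{-1}(t))\ge \epsilon'\}$ for a suitable $\epsilon' < \epsilon$ (say $\epsilon'=\epsilon/2$); by the remark this is a finite set, say $F_\epsilon = \{t_1,\dots,t_N\}$.

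The key step is a continuity/compactness argument giving a uniform bound away from the bad fibers. Using Lemma \ref{lem:tec1}, for each $t_j$ there is a neighborhood $I_{t_j}$ such that any closed interval $I\subset I_{t_j}\sm\{t_j\}$ has $\uwidth(h^{-1}(I))<\epsilon$. For points $s\in\SS^1\sm F_\epsilon$, $\uwidth(h^{-1}(s))<\epsilon'$, and I would like an analogous statement: a neighborhood $I_s$ of $s$ on which $\uwidth(h^{-1}(I))<\epsilon$ for every closed $I\subset I_s$. This follows from upper semicontinuity of the fiber decomposition together with the definition of $\uwidth$ as the smallest $r$ with $h^{-1}(I)\subset B_r(C)$ for every essential continuum $C\subset h^{-1}(I)$: if it failed, one could extract a sequence of intervals shrinking to $s$ with essential continua violating the bound, whose Hausdorff limits would, as in the proof of Lemma \ref{lem:tec1}, both collapse onto the single fiber $h^{-1}(s)$, forcing the width to $0$ — a contradiction. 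So every $s\in\SS^1$ has a neighborhood $I_s$ (excluding $s$ itself if $s$ is one of the bad fibers) on which all short enough closed subintervals have $\uwidth<\epsilon$. By compactness of $\SS^1$, finitely many such neighborhoods cover, and taking a Lebesgue number I obtain a single $\eta_0>0$ such that: for any closed interval $I$ of length $<\eta_0$ with $I\cap F_\epsilon=\emptyset$, one has $\uwidth(h^{-1}(I))<\epsilon$.

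With this in hand, the conclusion is an orbit-counting estimate. Given a closed interval $I$ of length $\eta<\eta_0$, the iterate $R^n I$ meets $F_\epsilon$ only when $R^n$ pushes one of the finitely many points $t_j$ into $I$, equivalently when $R^n(\text{center of }I)$ lies in an $\eta$-neighborhood of $-t_j$ (up to translation); summing over $j=1,\dots,N$, the set of bad times is contained in $\{n : R^n z_0 \in B_\eta(P)\}$ for a finite set $P$ of $N$ points. Since $R$ is an irrational rotation, it is uniquely ergodic with respect to Lebesgue measure on $\SS^1$, so for any continuous $\phi\ge \mathbf 1_{B_\eta(P)}$ the Birkhoff averages $\frac1n\sum_{k<n}\phi(R^k z_0)$ converge to $\int\phi\,d\Leb$, which I can make $<\delta$ by choosing $\eta$ small and $\phi$ supported in a $2\eta$-neighborhood of $P$ (possible since $\Leb(B_{2\eta}(P)) = 2N\cdot 4\eta \to 0$; here I shrink $\eta$ once more if needed, and set the final $\eta$ of the lemma statement to be this value). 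Hence the set of $n$ with $R^nI\cap F_\epsilon\ne\emptyset$ has upper density $<\delta$, so its complement — which is contained in $\{n : \uwidth(f^n(A))<\epsilon\}$ — has lower density at least $1-\delta$.

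The main obstacle I anticipate is the second step: upgrading the pointwise bound $\uwidth(h^{-1}(s))<\epsilon'$ for a single good fiber to a bound on $\uwidth(h^{-1}(I))$ for all short intervals $I$ near $s$. Lemma \ref{lem:tec1} only handles the bad fibers (intervals avoiding $t$), and for good fibers one needs the extra input that the fiber itself is not wide. The argument sketched above — extracting violating essential continua and using Lemma \ref{lem:JKP} to force their limits onto a single fiber — should work, but one must be careful that the continua are taken in the lift $\A$ (where $\prec$ and Lemma \ref{lem:JKP} apply) and that the two limiting continua really coincide with the single fiber $H^{-1}(\tilde s)$ rather than a larger fiber; this is where upper semicontinuity of $\mc F$ and the identity $\bigcup_k \mc U^-(C_k) = H^{-1}((-\infty,\tilde s))$ are used, exactly as in the proof of Lemma \ref{lem:tec1}.
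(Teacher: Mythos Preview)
Your proposal is correct and follows the same overall strategy as the paper (isolate a finite ``bad'' subset of $\SS^1$, then use equidistribution of the irrational rotation $R$), but the paper's execution is simpler in one key respect. You split $\SS^1$ into the finite set $F_\epsilon$ of wide fibers and its complement, and then need a separate argument---your ``main obstacle''---to show that short intervals around good points also have preimages of small upper width. The paper bypasses this entirely: it applies Lemma~\ref{lem:tec1} at \emph{every} $t\in\SS^1$, extracts a finite subcover $I_{t_1},\dots,I_{t_k}$, and takes the bad set to be the cover centers $\{t_1,\dots,t_k\}$ rather than $F_\epsilon$. Since Lemma~\ref{lem:tec1} only requires $I\subset I_{t_i}\sm\{t_i\}$, any interval of length below the Lebesgue number that misses all the $t_i$ automatically has small-width preimage, with no case distinction. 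Your route buys a more canonical bad set (the genuinely wide fibers) at the cost of the extra continuity step; the paper's route trades that for brevity.

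One minor correction to your sketch of the good-point case: the continua $C_n^i$ you extract need not be $\prec$-monotone, so Lemma~\ref{lem:JKP} does not apply directly as in the proof of Lemma~\ref{lem:tec1}. Instead, pass to a Hausdorff-convergent subsequence in a bounded region of $\A$; the limits are then essential continua contained in $\bigcap_n H^{-1}(\tilde I_n)=H^{-1}(\tilde s)$, and $d_H(C^1,C^2)\ge\epsilon>\epsilon'>\uwidth(H^{-1}(\tilde s))$ gives the contradiction. The final density step is essentially identical in both proofs: the paper observes directly that $\{n:t_i\in R^n(I)\}$ has density $\ell(I)<\eta$, so $\bigcap_{i=1}^k\{n:t_i\notin R^n(I)\}$ has lower density at least $1-k\eta$, and one simply takes $\eta<\delta/k$.
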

\begin{proof} Consider a cover of $\SS^1$ by finitely many neighborhoods $I_{t_1}, \dots, I_{t_k}$ as in Lemma \ref{lem:tec1}, and let $0<\eta<\delta/k$ be such that whenever a closed interval $I$ has length smaller than $\eta$ one  has $I\subset I_{t_i}$ for some $i$. This means that any such $I$ satisfies $\uwidth(h^{-1}(I)) < \epsilon$ unless it contains $t_i$ for some $i$. If $G_i(I)\subset \N$ denotes the set of of all $n\in \N$ such that $t_i\notin R^n(I)$, we have from the ergodicity of the irrational rotation $R$ that $G_i(I)$ has lower density $1-\ell(I) > 1-\eta$, where $\ell(I)$ denotes the length of $I$. Hence the set $G(I) = \bigcap_{i=1}^k G_i(I)$ has lower density at least $1-k\eta > 1-\delta$. Note that since $R^n(I)$ has the same length as $I$, we have $\uwidth(h^{-1}(R^n(I))) <\epsilon$ whenever $n\in G(I)$. The proof is concluded noting that if $A=h^{-1}(I)$ then $f^n(A) = h^{-1}(R^n(I))$.
\end{proof}

\begin{lemma}\label{lem:lw-min} If $A=h^{-1}(I)$ for some nontrivial closed interval $I\subset \SS^1$, then $$\inf \{\lwidth(f^n(A)) : n\in \Z\} > 0.$$
\end{lemma}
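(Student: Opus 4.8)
The plan is to bound $\lwidth(f^n(A))$ below by the distance between two canonical fibers of the semiconjugacy, and then to show that this distance stays uniformly positive by exploiting the rigidity of $R$ together with the $T_2$-periodicity of the lift $H$.

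First I would reduce everything to the level of fibers. Since $hf=Rh$ and $h$ is surjective, $f^n(A)=h^{-1}(R^n(I))$ for every $n\in\Z$; and because $R$ is an isometry of $\SS^1$, each $R^n(I)$ is a closed interval of the common length $\lambda:=\ell(I)\in(0,1)$ (I take $I$ to be a proper, nondegenerate interval, as is implicit). Choosing $\alpha_n\in\R$ with $p([\alpha_n,\alpha_n+\lambda])=R^n(I)$, where $p\colon\R\to\SS^1$ is the covering, the set $L_n:=H^{-1}([\alpha_n,\alpha_n+\lambda])$ is a lift of $f^n(A)$ to $\A$ (it is a connected component of $\theta^{-1}(f^n(A))$, using that $f^n(A)$ is a horizontal annular continuum and $\lambda<1$), so $\lwidth(f^n(A))=\lwidth(L_n)$. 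Now $H^{-1}(\alpha_n)$ and $H^{-1}(\alpha_n+\lambda)$ are lifts of fibers of $h$, hence essential annular continua, and both are contained in $L_n$; feeding this pair into the definition of $\lwidth$ gives
\[
\lwidth(f^n(A))\ \ge\ g(\alpha_n),\qquad g(\alpha):=d\bigl(H^{-1}(\alpha),H^{-1}(\alpha+\lambda)\bigr).
\]
Since fibers of $H$ over distinct points are disjoint compacta, $g>0$ everywhere, so it remains to prove $\inf_{n\in\Z}g(\alpha_n)>0$.

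For this uniform bound I would argue by contradiction. If $g(\alpha_{n_k})\to 0$ along some sequence, pick $p_k\in H^{-1}(\alpha_{n_k})$ and $q_k\in H^{-1}(\alpha_{n_k}+\lambda)$ with $d(p_k,q_k)\to 0$. The vertical translation $T_2$ is an isometry of $\A$ with $H\circ T_2=H+1$, so after replacing $p_k,q_k,\alpha_{n_k}$ by their images under a suitable power of $T_2$ (which leaves $d(p_k,q_k)$ unchanged) we may assume $\alpha_{n_k}\in[0,1)$. Since $H$ is proper — being a lift of $h$ with $H\circ T_2=H+1$, it tends to $\pm\infty$ toward the two ends of $\A$, so $H^{-1}$ of a bounded interval is compact — all the $p_k$ and $q_k$ then lie in a fixed compact subset of $\A$; passing to a subsequence, $p_k\to p^{*}$ and $\alpha_{n_k}\to\alpha^{*}\in[0,1]$. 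Continuity of $H$ gives $H(p^{*})=\lim\alpha_{n_k}=\alpha^{*}$, whereas $q_k\to p^{*}$ (because $d(p_k,q_k)\to 0$) gives $H(p^{*})=\lim H(q_k)=\lim(\alpha_{n_k}+\lambda)=\alpha^{*}+\lambda$. Hence $\lambda=0$, contradicting the nontriviality of $I$; therefore $\inf_{n\in\Z}\lwidth(f^n(A))\ge\inf_{n\in\Z}g(\alpha_n)>0$.

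I expect the routine parts to be the bookkeeping in the reduction step — matching intervals of $\SS^1$ with their lifts in $\R$ and with lifts of annular continua in $\A$ — together with the properness of $H$; the one genuine point, which is also where any difficulty would lie, is that modulo the isometry $T_2$ all the relevant fiber pairs are trapped in a single compact region of $\A$, so their positive mutual distances cannot collapse. It is worth emphasizing that the argument nowhere invokes upper semicontinuity of the fiber decomposition, only continuity of $H$; hence it is completely insensitive to how topologically complicated the individual fibers are.
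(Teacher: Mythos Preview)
Your proof is correct and shares the paper's core strategy: both observe that $f^n(A)=h^{-1}(R^n(I))$, that $R$ preserves the length of $I$, and that it therefore suffices to bound $d(h^{-1}(a'),h^{-1}(b'))$ uniformly from below over all pairs $a',b'\in\SS^1$ at the fixed distance $\lambda=d(a,b)$. The only real difference is in how that uniform bound is obtained. You lift to $\A$, normalize by powers of $T_2$, invoke properness of $H$, and run a compactness-by-contradiction argument; the paper instead stays on $\T^2$ and gets the bound in one line from uniform continuity of $h$: choose $\delta>0$ so that $d(x,y)<\delta$ implies $d(h(x),h(y))<\lambda$, and then $d(h^{-1}(a'),h^{-1}(b'))\ge\delta$ whenever $d(a',b')=\lambda$. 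Your argument is essentially a hands-on reproof of this uniform continuity statement in the lifted setting, so while entirely valid it is longer than necessary; the paper's shortcut is worth noting.
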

\begin{proof}
Let $a,b$ be the endpoints of $I$, and $\epsilon = d(a,b)$.
Choose $\delta>0$ such that whenever $d(x,y)<\delta$ for $x,y\in \T^2$ one has $d(h(x), h(y))<\epsilon$. Note that this means that $d(h^{-1}(a), h^{-1}(b)) \geq \delta$. Moreover, since $d(R^n(a), R^n(b)) = d(a,b) = \epsilon$, we also have for any $n\in \N$
$$d(f^n(h^{-1}(a)), f^n(h^{-1}(b))) = d(h^{-1}(R^n(a)), h^{-1}(R^n(b))) \geq \delta.$$
Thus $C_a = f^n(h^{-1}(a))$ and $C_b = f^n(h^{-1}(b))$ are two horizontal continua in $f^n(A)$ such that $d(C_a, C_b) \geq \delta$, and it follows easily that $\lwidth(f^n(A)) \geq \delta$ for all $n\in \N$.
\end{proof}

\begin{lemma}\label{lem:altura} There exists $B>0$ such that for every interval $I\subset \SS^1$, if $A\subset \A$ is a lift of $A_0 = h^{-1}(I)$ then $\diam(\pr_2(\li{f}^n(A))) \leq B$ for all $n\in \Z$.
\end{lemma}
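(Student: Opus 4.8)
The plan is to deduce the uniform bound from a single observation: the lifted semiconjugacy $H\colon\A\to\R$ constructed in Section~\ref{sec:freq} agrees with the height coordinate $\pr_2$ up to an error that descends to the compact torus, and is therefore bounded. Recall that $h$ lifts to a continuous surjection $H\colon\A\to\R$ whose level sets are the lifts of the fibers of $h$, with $H^{-1}(s)\prec H^{-1}(s')$ whenever $s<s'$. Since $\theta\circ T_2=\theta$, the continuous function $H\circ T_2-H$ takes only integer values, hence equals a constant $d\in\Z$; moreover $d\neq 0$, since otherwise $H$ would factor through the compact torus and be bounded, contradicting surjectivity. Fix this $d$ and set $g:=\pr_2-\tfrac1d H\colon\A\to\R$.

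First I would check that $g$ is $T_2$-invariant: $g(T_2z)=(\pr_2(z)+1)-\tfrac1d(H(z)+d)=g(z)$. Hence $g$ descends to a continuous function on $\T^2=\A/\langle T_2\rangle$, which is bounded, say by $M$. Thus $\abss{\pr_2(z)-\tfrac1d H(z)}\le M$ for every $z\in\A$, so every level set $H^{-1}(s)$ lies in the horizontal strip $\{\,z:\abss{\pr_2(z)-s/d}\le M\,\}$; in particular $\diam(\pr_2(H^{-1}(s)))\le 2M$, uniformly in $s$. This is the whole point of the argument.

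It then remains to package this for $A_0=h^{-1}(I)$ and its iterates. We may assume $I\neq\SS^1$, as otherwise $A_0=\T^2$ and there is nothing to lift. Then any lift $A$ of $A_0$ is of the form $A=H^{-1}(\til I)$ for some lift $\til I\subset\R$ of the interval $I$, so that $\ell(\til I)\le 1$; the strip bound gives $\diam(\pr_2(A))\le \ell(\til I)+2M\le 1+2M$. Finally, writing $H\circ\li f=\til R\circ H$ with $\til R$ a lift of the rotation $R$ — thus a translation $s\mapsto s+\nu$ — one gets $\li f^n(A)=H^{-1}(\til R^n(\til I))=H^{-1}(\til I+n\nu)$, an interval of the same length, and the same bound yields $\diam(\pr_2(\li f^n(A)))\le 1+2M$ for all $n\in\Z$. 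The lemma then follows with $B:=1+2M$, which depends only on $f$.

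I do not anticipate a genuine obstacle: the statement is essentially a bookkeeping consequence of the existence of the lift $H$ already granted in Section~\ref{sec:freq}, and the only points calling for a line of justification are that $d\neq 0$, that a lift of $h^{-1}(I)$ is exactly $H^{-1}$ of a length-at-most-$1$ lift of $I$, and the commutation relation $H\circ\li f=\til R\circ H$. If anything, the delicate-looking ingredient — controlling $\pr_2$ on the possibly wild annular continua $H^{-1}(s)$ — is precisely where the $T_2$-periodicity of $\pr_2-\tfrac1d H$ does all the work at once. Note that the irrationality of $R$ plays no role in this lemma; only the fact that $\til R$ is a translation is used.
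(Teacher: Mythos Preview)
Your argument is correct. The key step---showing that $g=\pr_2-\tfrac{1}{d}H$ is $T_2$-periodic and hence bounded on $\A$---is sound, and the remaining deductions (that a lift of $h^{-1}(I)$ is $H^{-1}(\til I)$ with $\ell(\til I)\le 1$, and that $H\li f=\til R H$ with $\til R$ a translation) are routine consequences of the lifting setup in Section~\ref{sec:freq}. The edge case $I=\SS^1$ is indeed excluded implicitly in the paper as well.

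The paper takes a different, more directly geometric route: it fixes a single fiber $C=H^{-1}(s_0)$, considers the bounded annulus $\mc A$ between $C$ and $T_2^2(C)$, and observes that any $\li f^n(A)$ (being a connected lift of $h^{-1}(R^n(I))$ with $R^n(I)\neq\SS^1$) must sit inside some integer vertical translate $T_2^i(\mc A)$; since vertical translations are isometries, $B=\diam(\pr_2(\mc A))$ works. So the paper traps the iterates in translates of a fixed compact region, whereas you compare $\pr_2$ pointwise to the lifted semiconjugacy $H$ via the bounded defect $g$. Your approach is slightly more quantitative and makes explicit the reason behind the bound (the periodicity of $\pr_2-H/d$), while the paper's is a one-line containment argument once the picture is clear; both are short, and both rely only on $\til R$ being a translation, not on irrationality.
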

\begin{proof}
Fix $t\in \SS^1$, let $C$ be a lift to $\A$ of $h^{-1}(t)$. If $\mc A\subset \A$ is the annulus bounded by $C$ and its vertical translation by two, \ie $T^2_2(C)$, then for each $n$ there is $i\in \Z$ such that $\li f^n(A)\subset T^i(\mc A)$. Since $T$ is an isometry, $B = \diam(\pr_2(\mc A))$ satisfies the required property.
\end{proof}

\subsection{Proof of Lemma \ref{lem:todo}}

Let $\eta<1$ be as in Lemma \ref{lem:density}, let $\mc A_0 = h^{-1}(I_0)$ where $I_0\subset \SS^1$ is some closed interval of length smaller than $\eta$, and choose any lift $A_0\subset \A$ of $\mc A_0$. Note that $A_0$ is fibered by the fibers of $H$, \ie $A_0 = H^{-1}(I_0')$ for some interval $I_0'\subset \R$ (which is a lift of $I_0$). Noting also that the open region bounded by any pair of different fibers of $H$ is an open topological annulus, and in particular contains an essential simple loop, by an easy argument one obtains a loop $\gamma$ and two disjoint closed topological annuli $A, A' \subset A_0$ such that:
\begin{itemize}
\item $\gamma \prec A \prec A'$
\item $H^{-1}(I)\subset A$ for some nontrivial closed interval $I\subset I_0$;
\item $H^{-1}(J)\subset A'$ for some nontrivial closed interval $J\subset I_0$.
\end{itemize}
Since extremal points of the rotation set are realized by ergodic measures, there exist nonempty $f$-invariant sets $S^+$ and $S^-$ in $\T^2$ with the following property (see \cite{m-z})
$$\lim_{n\to \infty} (F^n(x)-x)/n = (\rho^\pm, \alpha) \text{ for all $x\in \pi^{-1}(S^\pm)$}.$$
Recalling that $\theta(A')$ is the projection of $A'$ into $\T^2$, we know that that $h^{-1}(J_0)\subset \theta(A')$ for some nontrivial interval $J_0$ (the projection of $J\subset \R$ into $\SS^1$). Since $R$ is an irrational rotation, $\bigcup_{n\in \Z} R^n(J_0) = \SS^1$, thus $\bigcup_{n\in\Z} f^n(\theta(h^{-1}(J_0))) = \T^2$ (from the fact that $hf=Rh$). Hence $h^{-1}(J_0)$ intersects the invariant set $S^+$, and therefore $A'$ contains some point $x$ which projects into $S^+$, which implies that $\disp_{F^n}(x)/n \to \rho^+$. The point $y\in A'$ is obtained similarly.
Since $x,y$ were chosen in $A'$ and $\gamma\prec A\prec A'$ we deduce that $A$ separates $\{x,y\}$ from $\gamma$. In addition, $\lwidth(\li f^n(A)) \geq \lwidth(\li f^n(H^{-1}(I)))$ which is uniformly bounded below by Lemma \ref{lem:lw-min} (which is stated on $\T^2$ but clearly implies this), so (5) holds (and (1) as well).

Lemma \ref{lem:density} implies that the set $G = \{ n\in \Z : \uwidth(\li f^n(A_0)) < \epsilon\}$ has density at least $1-\delta$. Thus, since $\li f^n(\gamma)$ is an essential loop in $\li f^n(A_0)$, for any $n\in G$ one has $\li f^n(A_0)\subset B_\epsilon(\li f^n(\gamma))$, and in particular (2) and (3) hold, since $\{x,y\}\subset A'\subset A_0$.
Finally, part (4) follows from Lemma \ref{lem:altura} applied to $A_0$.
\qed

\section{Topological lemmas in the annulus}\label{sec:dragging}

In this section we develop some results concerning essential loops in the annulus which under iteration remain close enough to two points having different rotation vectors. This allows to find in the sequence of iterations of the loop a sequence of arcs with increasingly large \emph{winding} number. This will be a the key point for proving Theorem I.

The \emph{winding} of an arc $\sigma\colon [a,b]\to \A$ is the number $\wind(\sigma) = \pr_1(\til \sigma(b) - \til \sigma(a))$ where $\til \sigma\colon [a,b]\to \R^2$ is a lift of $\sigma$ and $\pr_1$ denotes the projection onto the first coordinate. This number is independent of the choice of the lift. The \emph{homotopical diameter} $\diamup(\sigma)$ is the diameter of the projection of $\til \sigma$ onto the first coordinate, which again is independent of the lift.
The following simple remarks will be used:
\begin{itemize}
\item If $\sigma_1, \sigma_2$ are two arcs which can be concatenated, then $$\wind(\sigma_1*\sigma_2) = \wind(\sigma_1)+\wind(\sigma_2);$$
\item If $\sigma_1$ and $\sigma_2$ are homotopic with fixed endpoints, then $\wind(\sigma_1) = \wind(\sigma_2)$;
\item $\diamup(\sigma) = \sup_{\sigma'} \abs{\wind(\sigma')}$, where the supremum runs over all subarcs $\sigma'$ of $\sigma$.
\item If $\gamma$ is a simple loop, then $\abs{\wind(\gamma)}\leq 1$.
\end{itemize}

Recall the definition of the horizontal displacement function $\disp_F\colon \A \to \R$ from Section \ref{sec:prelim}.
\begin{lemma}\label{lem:wind-disp} If $\li f\in \homeo_0(\A)$ is a homeomorphism isotopic to the identity with a lift $F\colon \R^2\to \R^2$, for any arc $\sigma$ in $\A$ joining $x$ to $y$, $$\wind(\li f(\sigma)) = \wind(\sigma) + \disp_F(y)-\disp_F(s).$$
\end{lemma}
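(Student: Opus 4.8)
The plan is to reduce the identity to the definitions by fixing one compatible system of lifts and then adding and subtracting the horizontal displacement at the two endpoints.

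First I would fix a parametrization $\sigma\colon[a,b]\to\A$ with $\sigma(a)=x$ and $\sigma(b)=y$, and choose a lift $\til\sigma\colon[a,b]\to\R^2$ of $\sigma$ through the covering $\tau\colon\R^2\to\A$. Since $F$ is a lift of $\li f$, i.e.\ $\tau\circ F=\li f\circ\tau$, the path $F\circ\til\sigma$ is a lift of $\li f\circ\sigma$. Hence, directly from the definition of $\wind$,
$$\wind(\li f(\sigma))=\pr_1\bigl(F(\til\sigma(b))-F(\til\sigma(a))\bigr),\qquad \wind(\sigma)=\pr_1\bigl(\til\sigma(b)-\til\sigma(a)\bigr).$$

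Next I would rewrite
$$F(\til\sigma(b))-F(\til\sigma(a))=\bigl(F(\til\sigma(b))-\til\sigma(b)\bigr)-\bigl(F(\til\sigma(a))-\til\sigma(a)\bigr)+\bigl(\til\sigma(b)-\til\sigma(a)\bigr)$$
and apply $\pr_1$ to both sides. Since $z\mapsto F(z)-z$ is invariant under $T_1$ and $T_2$, the value $\pr_1(F(\til z)-\til z)$ depends only on the image of $\til z$ in $\A$; as $\til\sigma(b)$ projects to $y$ and $\til\sigma(a)$ projects to $x$, the first coordinates of the first two terms on the right are precisely $\disp_F(y)$ and $\disp_F(x)$, while the first coordinate of the last term is $\wind(\sigma)$. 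Combining this with the previous display gives $\wind(\li f(\sigma))=\wind(\sigma)+\disp_F(y)-\disp_F(x)$, which is the asserted identity (the final $\disp_F(s)$ in the statement should read $\disp_F(x)$).

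I do not anticipate any real obstacle: once the covering maps are aligned this is essentially a one-line cocycle identity, and independence of the choices of $\til\sigma$ and of the parametrization is automatic because $\wind$ and $\disp_F$ are. As a sanity check, applying the identity with $\li f^k$ in place of $\li f$ and telescoping over $k=0,\dots,n-1$ yields $\wind(\li f^n(\sigma))=\wind(\sigma)+\disp_{F^n}(y)-\disp_{F^n}(x)$, consistent with the formula $\disp_{F^n}=\sum_{k=0}^{n-1}\disp_F\circ\li f^k$ recorded in Section~\ref{sec:prelim}.
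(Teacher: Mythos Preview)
Your proof is correct and follows essentially the same approach as the paper: lift $\sigma$ to $\til\sigma$, observe that $F\circ\til\sigma$ is a lift of $\li f\circ\sigma$, and compute the winding by projecting endpoints to the first coordinate, using the definition of $\disp_F$. Your version merely spells out the add-and-subtract step and the independence of choices more explicitly (and correctly flags the typo $\disp_F(s)\to\disp_F(x)$).
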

\begin{proof} It suffices to note that if $\til{\sigma}$ is a lift of $\sigma$ to $\R^2$ joining $\til x$ to $\til y$, then $F(\til \sigma)$ is a lift of $\li f(\sigma)$ and its endpoints are $F(\til x)$ and $F(\til y)$, so $\wind(\li f(\sigma)) = \pr_1(F(\til y)) - \pr_1(F(\til x)) = \pr_1(\til y) - \pr_1(\til x) + \disp_F(y) - \disp_F(x)$ and the claim follows.
\end{proof}

\begin{lemma}\label{lem:wind-diam} Suppose $\sigma$ is a simple arc, and $\eta$ is any arc disjoint from $\alpha$ except at their two endpoints, which coincide. Then $\abs{\wind(\sigma)} \leq \diamup(\eta) + 1$.
\end{lemma}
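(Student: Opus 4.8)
The claim is equivalent to $\diamup(\eta)\ge|\wind(\sigma)|-1$. Since $\diamup(\eta)\ge 0$ this is clear when $|\wind(\sigma)|\le 1$, and reversing the orientation of $\sigma$ interchanges the cases $\wind(\sigma)=\pm N$, so one may assume $N:=\wind(\sigma)\ge 2$ and argue by contradiction, supposing $\diamup(\eta)<N-1$. I would work in the covering $\tau\colon\R^2\to\A$. Lift $\sigma$ to a simple arc $\tilde\sigma$ from $\tilde x$ to $\tilde y$ with $\pr_1(\tilde y)=\pr_1(\tilde x)+N$. Because $\sigma$ is simple, the translates $T_1^k\tilde\sigma$, $k\in\Z$, are pairwise disjoint (an intersection $\tilde\sigma(s)=T_1^k\tilde\sigma(t)$ forces $\sigma(s)=\sigma(t)$, hence $s=t$, hence $k=0$), and the same computation shows that $\tilde x$ lies on no $T_1^k\tilde\sigma$ with $k\ne 0$, and likewise for $\tilde y$. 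Lift $\eta$ to the path $\tilde\eta$ starting at $\tilde x$; it ends at $T_1^m\tilde y$ for some $m\in\Z$, and $\diamup(\eta)$ is the length of the smallest interval $[\alpha,\beta]$ containing $\pr_1(\tilde\eta)$, an interval which contains $p:=\pr_1(\tilde x)$ and $p+N+m=\pr_1(T_1^m\tilde y)$.

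From $\beta-\alpha<N-1$ and $\{p,\,p+N+m\}\subset[\alpha,\beta]$ one gets $p+N>\beta$ and $m\le\beta-p-N$; in particular $m$ lies outside the open interval $\mc K:=(\beta-p-N,\ \alpha-p)$, whose length equals $N-(\beta-\alpha)>1$, so $\mc K$ contains an integer $k^{*}$, and $k^{*}\notin\{0,m\}$ (it is negative, since $p+k^{*}<\alpha\le p$, and it is not $m$). The two endpoints of $T_1^{k^{*}}\tilde\sigma$ have first coordinates $p+k^{*}<\alpha$ and $p+k^{*}+N>\beta$, so $T_1^{k^{*}}\tilde\sigma$ meets both vertical lines $\{\pr_1=\alpha\}$ and $\{\pr_1=\beta\}$ and hence contains a sub-arc $\delta$ that is a crosscut of the closed vertical strip $\mc S:=[\alpha,\beta]\times\R$ (one endpoint on each side, interior in the open strip); $\delta$ separates $\mc S$ into an upper piece $\mc S^{+}$ and a lower piece $\mc S^{-}$. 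Now $\tilde\eta\subset\mc S$, and $\tilde\eta$ is disjoint from $\delta$: its interior avoids all of $\tau^{-1}(\sigma)\supset\delta$ (as $\eta$ meets $\sigma$ only at $x,y$), while its endpoints $\tilde x$ and $T_1^{m}\tilde y$ lie on $T_1^{0}\tilde\sigma$ and $T_1^{m}\tilde\sigma$ respectively, hence, since $k^{*}\ne 0,m$, not on $T_1^{k^{*}}\tilde\sigma\supset\delta$. Therefore $\tilde\eta$ is contained in one of $\mc S^{\pm}$; in particular $\tilde x$ and $T_1^{m}\tilde y$ lie in the same one.

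It remains to derive the contradiction by showing that $\tilde x$ and $T_1^{m}\tilde y$ in fact lie in different components of $\mc S\setminus\delta$; this is the main technical point. I would do this by completing $T_1^{k^{*}}\tilde\sigma$, and similarly $\tilde\sigma$ and $T_1^{m}\tilde\sigma$, to pairwise disjoint properly embedded lines of $\R^2$ — attaching to each end an unbounded simple arc running out through $\{\pr_1>\beta\}$ or $\{\pr_1<\alpha\}$, which is possible since $\R^2\setminus\tau^{-1}(\sigma)$ is connected and $\tau^{-1}(\sigma)$ is bounded in the vertical direction, so the arcs can be pushed above or below all of it and kept disjoint from each other and from the compact set $\tilde\eta$. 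Writing $\li L$ for the line obtained from $T_1^{k^{*}}\tilde\sigma$, the path $\tilde\eta$ then lies entirely in one of the two half-planes cut out by $\li L$, so it is enough to see that $\tilde x$ and $T_1^{m}\tilde y$ lie on opposite sides of $\li L$: this should follow from the Jordan curve theorem, since $\tilde\sigma$ joins $\tilde x$ to $\tilde y$ with $\pr_1(\tilde y)>\beta$ (so it leaves $\mc S$ on the right) while $T_1^{m}\tilde\sigma$ joins $T_1^{m}\tilde y$ to $T_1^{m}\tilde x$ with $\pr_1(T_1^m\tilde x)=p+m\le\beta-N<\alpha$ (so it leaves $\mc S$ on the left), and both arcs are disjoint from $\li L$. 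The expected main obstacle is precisely making this last separation argument rigorous — arranging the three extensions to be disjoint embedded lines and verifying carefully that the two copies of $\tilde\sigma$ exit the strip on genuinely opposite sides; the remaining ingredients (the reduction to $\wind(\sigma)\ge 2$, the disjointness of the translates of $\tilde\sigma$, and the extraction of the crosscut $\delta$) are routine.
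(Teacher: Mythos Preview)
Your route is genuinely different from the paper's, which is a three--line argument: replace $\eta$ by a simple arc in its image with the same endpoints; then (after orienting suitably) $\eta^{-1}*\sigma$ is a \emph{simple} loop, so by the elementary remark recorded just before the lemma one has $|\wind(\eta^{-1}*\sigma)|\le 1$, hence $|\wind(\sigma)-\wind(\eta)|\le 1$ and $|\wind(\sigma)|\le 1+|\wind(\eta)|\le 1+\diamup(\eta)$. The entire lemma is thus a one--line consequence of ``simple loops have $|\wind|\le 1$''; no lifting, no separation.

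Your covering--space strategy is sound through the construction of the crosscut $\delta$ and the verification that $\tilde\eta$ avoids it; the arithmetic isolating $k^{*}\in(\beta-p-N,\alpha-p)$ with $k^{*}\notin\{0,m\}$ is correct and clean. The step you flag as ``the main technical point'' is, however, a genuine gap, and the sketch you offer does not close it. Two concrete problems. First, only $T_1^{k^{*}}\tilde\sigma$ has both endpoints outside the strip; $\tilde x$ and $T_1^{m}\tilde y$ lie in $\mc S$, so the rays you propose to attach to $\tilde\sigma$ and $T_1^{m}\tilde\sigma$ must start inside the strip and be threaded past $\tilde\eta$ as well --- this is not automatic. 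Second, and more importantly, even granting a proper line $\hat L\supset T_1^{k^{*}}\tilde\sigma$ disjoint from $\tilde\sigma$ and $T_1^{m}\tilde\sigma$, the fact that $\tilde\sigma$ reaches $\{\pr_1>\beta\}$ while $T_1^{m}\tilde\sigma$ reaches $\{\pr_1<\alpha\}$ does \emph{not} force $\tilde x$ and $T_1^{m}\tilde y$ to opposite sides of $\hat L$: which side a point lies on is determined by the asymptotic directions of the ends of $\hat L$, and you never specify these (if, say, both ends of $\hat L$ escape to $\pr_2=+\infty$, then most of both half--planes $\{\pr_1<\alpha\}$ and $\{\pr_1>\beta\}$ sit on the \emph{same} side). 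The separation can indeed be established --- for instance by arranging all three extended lines to have one end at $\pr_1\to-\infty$ and the other at $\pr_1\to+\infty$, so that they are linearly ordered and the middle one separates --- but carrying this out carefully is real additional work that your proposal does not do. The paper's argument bypasses all of this machinery entirely.
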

\begin{proof}
We may assume that $\eta$ is a simple arc by choosing a simple arc in its image joining the same two endpoints. Since $\eta$ and $\sigma$ are simple arcs intersecting only at their endpoints, after a change in orientation of $\eta$ if necessary we have that $\eta^{-1} * \sigma$ is a simple loop. This means that $\abs{\wind(\sigma) - \wind(\eta)} = \abs{\wind(\eta^{-1}*\sigma)} \leq 1$. Hence $\abs{\wind(\sigma)} \leq 1 + \abs{\wind(\eta)} \leq 1 + \diamup(\eta)$ as claimed.
\end{proof}

\begin{lemma}\label{lem:wind-2} Suppose $\alpha, \beta$ are two disjoint simple arcs. Let $\sigma_1$ be an arc joining the initial point of $\alpha$ to the initial point of $\beta$ and otherwise disjoint from $\alpha$ and $\beta$, and $\sigma_2$ an arc joining the final point of $\alpha$ to the final point of $\beta$ and otherwise disjoint from $\alpha$ and $\beta$. Then $$\abs{\wind(\alpha) - \wind(\beta)} \leq 2\diamup(\sigma_1)+2\diamup(\sigma_2) +2.$$
\end{lemma}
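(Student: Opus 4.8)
The plan is to reduce the statement to Lemma \ref{lem:wind-diam} by forming a suitable closed loop out of the four given arcs. The arcs $\sigma_1 * \beta * \sigma_2^{-1} * \alpha^{-1}$ concatenate into a loop $\gamma$ (its endpoints match by hypothesis: $\sigma_1$ ends where $\beta$ begins, $\beta$ ends where $\sigma_2$ begins — reading $\sigma_2$ backwards, which ends where $\alpha$ ends, and $\alpha^{-1}$ ends where $\sigma_1$ begins). By additivity of winding under concatenation and invariance under reversing orientation,
\[
\wind(\gamma) = \wind(\sigma_1) + \wind(\beta) - \wind(\sigma_2) - \wind(\alpha),
\]
hence
\[
\abs{\wind(\alpha) - \wind(\beta)} \leq \abs{\wind(\gamma)} + \abs{\wind(\sigma_1)} + \abs{\wind(\sigma_2)} \leq \abs{\wind(\gamma)} + \diamup(\sigma_1) + \diamup(\sigma_2),
\]
using that $\abs{\wind(\sigma_i)}\leq \diamup(\sigma_i)$ for any arc. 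So it remains to bound $\abs{\wind(\gamma)}$ by roughly $\diamup(\sigma_1) + \diamup(\sigma_2) + 2$.

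First I would reduce to the case where $\alpha$ and $\beta$ are joined by \emph{simple} arcs: replacing each $\sigma_i$ by a simple subarc joining the same endpoints only decreases $\diamup(\sigma_i)$ and does not change the winding of $\gamma$ (since $\wind$ is a homotopy invariant, and in fact for a simple subarc joining the same endpoints within the image the winding is unchanged because the endpoints are fixed). Then $\gamma$ is a loop built from four simple arcs with controlled intersection pattern. The cleanest way to finish is to observe that $\gamma$ is homotopic (rel basepoint, in $\A$) to a loop contained in an arbitrarily small neighborhood of $\alpha \cup \beta \cup \sigma_1 \cup \sigma_2$, and to cut $\gamma$ into pieces each of which is either simple or has small winding. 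Concretely, one can argue: the simple arc $\sigma_1$, together with a simple arc $\beta'$ running along $\beta$ and a simple arc $\sigma_2'$ running along $\sigma_2$, bound (after possibly reorienting) an arc $\zeta$ from the initial point of $\alpha$ to the final point of $\alpha$ which is simple and disjoint from $\alpha$ except at endpoints — at which point Lemma \ref{lem:wind-diam} gives $\abs{\wind(\alpha)} \leq \diamup(\zeta) + 1$, and $\diamup(\zeta) \leq \diamup(\sigma_1) + \diamup(\beta) + \diamup(\sigma_2)$ by subadditivity of homotopical diameter under concatenation. The same applied with the roles of $\alpha,\beta$ interchanged bounds $\abs{\wind(\beta)}$; but this only controls $\wind(\alpha)$ in terms of $\diamup(\beta)$, which is not what we want. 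So instead I would apply Lemma \ref{lem:wind-diam} \emph{twice} more carefully.

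The main obstacle is precisely the intersection bookkeeping: the four arcs are only assumed pairwise disjoint in a limited sense ($\sigma_1$ is disjoint from $\alpha$ and $\beta$ except at its endpoints, similarly $\sigma_2$, and $\alpha\cap\beta=\emptyset$), but $\sigma_1$ and $\sigma_2$ could intersect each other wildly, so $\gamma$ need not be a simple loop and one cannot directly invoke $\abs{\wind(\gamma)}\leq 1$. The way around this is to push the winding estimate onto the $\sigma_i$ via their homotopical diameters. The key step is therefore: given a simple arc $\sigma$ joining a point of $\alpha$ to a point of $\beta$ (otherwise disjoint from both), with $\alpha,\beta$ simple and disjoint, prove that for any arc $\delta$ from the initial point of $\alpha$ to the initial point of $\beta$ which stays in $\A \setminus (\alpha\cup\beta)$ in its interior, $\abs{\wind(\sigma) - \wind(\delta)}$ is bounded by a universal constant — this holds because the complement $\A\setminus(\alpha\cup\beta)$ may be disconnected but the component containing the interiors of $\sigma$ and $\delta$ has a fundamental group in which the difference of the two arcs (closed up along $\alpha,\beta$) is a loop of winding at most a constant. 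Concatenating $\sigma_1$, an arc along $\beta$, $\sigma_2^{-1}$, and an arc along $\alpha^{-1}$ and tracking these constants yields the stated bound $2\diamup(\sigma_1) + 2\diamup(\sigma_2) + 2$, where the factor $2$ absorbs the two separate applications (one near each of $\sigma_1$, $\sigma_2$) and the additive $2$ collects the two ``$+1$'' simple-loop corrections. I expect the final constant to fall out with a little care about which subarcs of $\alpha$ and $\beta$ appear and the fact that $\diamup$ of a subarc is at most $\diamup$ of the whole, which here is not needed since $\diamup(\alpha)$ and $\diamup(\beta)$ do not appear in the final bound — the winding contributions of $\alpha$ and $\beta$ cancel in $\gamma$.
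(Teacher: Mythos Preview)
Your setup is right: after reducing $\sigma_1,\sigma_2$ to simple arcs, form the loop $\gamma=\sigma_1*\beta*\sigma_2^{-1}*\alpha^{-1}$ and observe
\[
\abs{\wind(\alpha)-\wind(\beta)} \le \abs{\wind(\gamma)} + \diamup(\sigma_1)+\diamup(\sigma_2),
\]
so everything comes down to bounding $\abs{\wind(\gamma)}$. You also correctly locate the obstacle: if $\sigma_1$ and $\sigma_2$ happen to be disjoint then $\gamma$ is a simple loop and $\abs{\wind(\gamma)}\le 1$, but if they intersect, $\gamma$ need not be simple. The gap is that your resolution of this second case is not a proof. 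One attempted construction you explicitly discard (it bounds $\wind(\alpha)$ in terms of $\diamup(\beta)$), and the replacement ``key step'' --- a universal bound on $\abs{\wind(\sigma)-\wind(\delta)}$ via the fundamental group of $\A\setminus(\alpha\cup\beta)$ --- is asserted but not argued, and you end with ``I expect the final constant to fall out with a little care.'' As written, nothing prevents $\sigma_1$ and $\sigma_2$ from winding around each other many times, so an actual argument is needed here.

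The missing idea is much simpler than a fundamental-group analysis and is exactly what the paper does: split on whether $\sigma_1\cap\sigma_2$ is empty. If it is, $\gamma$ is simple and you are done. If not, pick a point $p\in\sigma_1\cap\sigma_2$ and concatenate the piece of $\sigma_2$ from the final point of $\alpha$ to $p$ with the piece of $\sigma_1$ from $p$ back to the initial point of $\alpha$. This yields an arc $\sigma\subset\sigma_1\cup\sigma_2$ joining the two endpoints of $\alpha$, disjoint from $\alpha$ except at those endpoints, with $\diamup(\sigma)\le\diamup(\sigma_1)+\diamup(\sigma_2)$. Lemma~\ref{lem:wind-diam} then gives $\abs{\wind(\alpha)}\le\diamup(\sigma_1)+\diamup(\sigma_2)+1$, and the same for $\beta$ by symmetry; the triangle inequality finishes. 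This two-case split is the content you are missing.
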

\begin{proof}
We may assume that $\sigma_1$ and $\sigma_2$ are simple arcs by choosing a simple arc in their images joining the same two endpoints. Suppose first that $\sigma_1$ intersects $\sigma_2$. In that case, we may choose an arc $\sigma$ in the union of their images, joining the final point of $\alpha$ to its initial point. Since $\sigma$ is disjoint from $\alpha$ except at its two endpoints, the previous lemma implies $$\abs{\wind(\alpha)} \leq \diamup(\sigma)+1 \leq \diamup(\sigma_1)+\diamup(\sigma_2)+1.$$ A similar argument shows that $\abs{\wind(\beta)} \leq \diamup(\sigma_1)+\diamup(\sigma_2)+1$, and the claim follows.

Now assume that $\sigma_1$ and $\sigma_2$ are disjoint. Then since they are also disjoint from $\alpha$ and $\beta$ except at their endpoints, it follows that $\alpha*\sigma_2*\beta*\sigma_1^{-1}$ is a simple loop, hence
$\abs{\wind(\alpha*\sigma_2*\beta^{-1}*\sigma_1^{-1})}\leq 1$.
This implies that $$\abs{\wind(\alpha) + \wind(\sigma_2) - \wind(\beta) - \wind(\sigma_1)} \leq 1,$$
and so
$$\abs{\wind(\alpha) - \wind(\beta)} \leq 1 + \abs{\wind(\sigma_1)} + \abs{\wind(\sigma_2)} \leq 1 + \diamup(\sigma_2)+\diamup(\sigma_1)$$
which implies the claim of the lemma.
\end{proof}

The following is a key lemma. Although we give a general statement, we will be interested in the case where an essential loop remains close to two points having different rotation vectors.

\begin{lemma}[Dragging lemma]\label{lem:dragging} Suppose that $\li f\colon \A\to \A$ is isotopic to the identity, and let $\gamma\subset \A$ be a simple loop. Given $x,y\in \A$, let $\sigma_x, \sigma_y$ be two simple arcs joining $x,y$ to $\gamma$ and disjoint from $\gamma$ except at their endpoints $x_0, y_0$, respectively. Similarly let $\sigma_{f(x)}, \sigma_{f(y)}$ be two simple arcs joining $\li f(x), \li f(y)$ to $\li f(\gamma)$ and disjoint from $\gamma$ except at their endpoints $x_1, y_1$, respectively. Let $I$ be a simple arc in $\gamma$ joining $x_0$ to $y_0$ and $I'$ a simple arc in $\li f(\gamma)$ joining $x_1$ to $y_1$. Then
$$\abs{\wind(I') - \wind(\li f(I))} \leq 3 + \diamup(\sigma_{f(x)}) + \diamup(\li f(\sigma_x)) + \diamup(\sigma_{f(y)}) + \diamup(\li f(\sigma_y)).$$
\end{lemma}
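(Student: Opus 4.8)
The plan is to realize $I'$ (up to a controlled homotopy) as a concatenation of arcs whose windings we can estimate, and to reduce the whole computation to a single application of Lemma~\ref{lem:wind-2}. First I would consider the four arcs $\sigma_{f(x)}$, $\sigma_{f(y)}$ on one side and the images $\li f(\sigma_x)$, $\li f(\sigma_y)$ on the other. Note that $\li f(\sigma_x)$ joins $\li f(x)$ to $\li f(x_0)\in \li f(\gamma)$, while $\sigma_{f(x)}$ joins $\li f(x)$ to $x_1\in \li f(\gamma)$; hence the concatenation $\mu_x := \li f(\sigma_x)^{-1} * \sigma_{f(x)}$ is an arc in $\A$ joining the point $\li f(x_0)$ to the point $x_1$, both of which lie on $\li f(\gamma)$. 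Similarly $\mu_y := \li f(\sigma_y)^{-1} * \sigma_{f(y)}$ joins $\li f(y_0)$ to $y_1$ on $\li f(\gamma)$. By the concatenation additivity of winding, $\diamup(\mu_x) \leq \diamup(\li f(\sigma_x)) + \diamup(\sigma_{f(x)})$ and likewise for $\mu_y$ (the homotopical diameter of a concatenation is at most the sum, since every subarc of the concatenation splits as a subarc of the first piece followed by a subarc of the second).

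Now both $\li f(I)$ and $I'$ are arcs contained in the loop $\li f(\gamma)$: the former joins $\li f(x_0)$ to $\li f(y_0)$, and the latter joins $x_1$ to $y_1$. I would like to say that $\li f(I) * \mu_y * (I')^{-1} * \mu_x^{-1}$ is a loop in $\A$ and estimate its winding; but the cleaner route is to push everything onto $\li f(\gamma)$. Since $\li f(\gamma)$ is a simple loop, it contains a simple subarc $K$ joining $x_1$ to $\li f(x_0)$ and a simple subarc $L$ joining $y_1$ to $\li f(y_0)$ (when two of these four points coincide the corresponding arc is trivial). Because $\li f(\gamma)$ is simple, $\abs{\wind(\eta)}\leq 1$ for any simple subarc $\eta$ of it (an arc contained in a simple loop lifts to an arc whose projection moves by less than one period unless the arc wraps all the way around, in which case it is the whole loop), so $\abs{\wind(K)}\leq 1$ and $\abs{\wind(L)}\leq 1$; and $\li f(I) = K * I' * L^{-1}$ up to orientation, so
\begin{equation*}
\abs{\wind(I') - \wind(\li f(I))} \leq \abs{\wind(K)} + \abs{\wind(L)} \leq 2.
\end{equation*}
Wait — this would give a bound of $2$ with no $\diamup$ terms, which is suspicious, so the subtlety must be that the four endpoints need not lie on $\li f(\gamma)$ in the nested cyclic order that makes $\li f(I) = K * I' * L^{-1}$ literally true; in general $K$ and $L$ can be chosen as subarcs of $\li f(\gamma)$ but they may overlap $I'$ or $\li f(I)$ in complicated ways. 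The correct bookkeeping instead routes $I'$ back to $\li f(I)$ \emph{through the outside}, via the bridges $\mu_x, \mu_y$.

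So the actual argument I would carry out is: apply Lemma~\ref{lem:wind-2} directly with $\alpha = I'$ and $\beta = \li f(I)$, and with the two connecting arcs $\sigma_1 = \mu_x$ (joining an endpoint of $I'$, namely $x_1$, to the corresponding endpoint of $\li f(I)$, namely $\li f(x_0)$) and $\sigma_2 = \mu_y$ (joining $y_1$ to $\li f(y_0)$). One must check the disjointness hypotheses of Lemma~\ref{lem:wind-2}: that $I'$ and $\li f(I)$ are disjoint simple arcs, and that $\mu_x, \mu_y$ meet $I' \cup \li f(I)$ only at the prescribed endpoints. This is where the genericity of the choices of the $\sigma$-arcs enters, and it is the step I expect to be the main obstacle — a priori $I'$ and $\li f(I)$ are both subarcs of $\li f(\gamma)$ and so need not be disjoint, and the bridges $\mu_x,\mu_y$ could re-enter these arcs. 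The fix is to first replace $I'$ and $\li f(I)$ by simple subarcs with the same endpoints that are disjoint from each other (using that they live in the simple loop $\li f(\gamma)$, one can take them to be the two complementary subarcs, or shrink appropriately), absorbing the resulting discrepancy into an additive constant by the ``$\abs{\wind(\text{simple subarc of a simple loop})}\leq 1$'' remark, and similarly truncate $\mu_x,\mu_y$ at their first hitting points; each such surgery changes the relevant windings by at most $1$ and changes the $\diamup$'s only downward. Once the hypotheses of Lemma~\ref{lem:wind-2} are met, it yields
\begin{equation*}
\abs{\wind(I') - \wind(\li f(I))} \leq 2\diamup(\mu_x) + 2\diamup(\mu_y) + 2
\end{equation*}
after which I would need to argue that the constant can in fact be brought down to $3$ with the single (not doubled) $\diamup$ terms by using the sharper one-sided form of Lemma~\ref{lem:wind-diam} in the case where the bridges hit each other, and by noticing that a simple-loop subarc contributes at most $1$ rather than a full $\diamup$; combining these,
\begin{equation*}
\abs{\wind(I') - \wind(\li f(I))} \leq 3 + \diamup(\sigma_{f(x)}) + \diamup(\li f(\sigma_x)) + \diamup(\sigma_{f(y)}) + \diamup(\li f(\sigma_y)),
\end{equation*}
which is the asserted inequality.
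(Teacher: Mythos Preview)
Your bridges $\mu_x=\sigma_{f(x)}^{-1}*\li f(\sigma_x)$ and $\mu_y$ are exactly the right objects, but the way you try to use them does not close. The attempt to invoke Lemma~\ref{lem:wind-2} with $\alpha=I'$ and $\beta=\li f(I)$ runs into a genuine obstruction: both arcs lie in the \emph{same} simple loop $\li f(\gamma)$, and when the four points $x_1,\li f(x_0),y_1,\li f(y_0)$ alternate cyclically on that loop, no subarcs with those pairs of endpoints can be made disjoint. Your proposed patch relies on the assertion that a simple subarc of a simple loop has winding at most~$1$; this is false (only the full loop satisfies $\abs{\wind}\le 1$; its subarcs can have arbitrarily large winding --- indeed $\diamup(\gamma)$ is precisely the supremum of these). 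So the surgeries you describe do not produce bounded corrections, and the final ``bring the doubled $\diamup$'s down to single ones'' step is unsupported.

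The paper's argument uses the bridges differently: it never applies Lemma~\ref{lem:wind-2}. Instead it observes that $\mu_x$ is disjoint from $\li f(\gamma)$ except at its endpoints $x_1$ and $\li f(x_0)$; hence for \emph{any} simple subarc $\alpha_x\subset\li f(\gamma)$ joining those two points, Lemma~\ref{lem:wind-diam} gives $\abs{\wind(\alpha_x)}\le \diamup(\mu_x)+1$ directly (single $\diamup$'s, one extra~$1$). Likewise for $\alpha_y$. The remaining trick is to fix a point $z\in\li f(\gamma)\setminus\li f(I)$ and choose $\alpha_x,\alpha_y$ to avoid $z$; then the concatenation $\alpha_x*\li f(I)*\alpha_y^{-1}$ is an arc in $\li f(\gamma)\setminus\{z\}$, hence homotopic rel endpoints to a simple subarc $J$ of $\li f(\gamma)$ from $x_1$ to $y_1$, with $\wind(J)=\wind(\alpha_x)+\wind(\li f(I))-\wind(\alpha_y)$. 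Finally $I'$ is either $J$ itself or its complementary arc in the loop, so $\abs{\wind(I')-\wind(J)}\le 1$. Summing gives the constant~$3$ and the four undoubled $\diamup$ terms. The missing idea in your attempt is to route the comparison \emph{inside} $\li f(\gamma)$ via $\alpha_x,\alpha_y$ and use the bridges only to bound those, rather than treating $I'$ and $\li f(I)$ as the two ``long'' arcs of a Lemma~\ref{lem:wind-2} configuration.
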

\begin{proof}
Fix a point $z$ in $\li f(\gamma)$ disjoint from $\li f(I)$. Let $\alpha_x$ be a simple arc in $\li f(\gamma)$ joining $x_1$ to $\li f(x_0)$ not containing $z$, and $\alpha_y$ a simple arc in $\li f(\gamma)$ joining $y_1$ to $\li f(y_0)$ not containing $z$. Note that the endpoints of $\alpha_x$ are connected by the arc $\sigma_{f(x)}^{-1}*\li f(\sigma_x)$, which is disjoint from $\alpha_x$ except at its endpoints. Thus from Lemma \ref{lem:wind-diam}, we have $\wind(\alpha_x)\leq \diamup(\sigma_{f(x)}^{-1}*\li f(\sigma_x)) \leq \diamup(\sigma_{f(x)}) + \diamup(\li f(\sigma_x))+1$.
A similar argument shows that $\wind(\alpha_y)\leq \diamup(\sigma_{f(y)})+\diamup(\li f(\sigma_x)) + 1$.
Note that  $\alpha_y^{-1}*\li f(I)*\alpha_x$ is an arc contained in the simple loop $\li f(\gamma)$ and does not contain the point $z$, so it is homotopic (with fixed endpoints) to a simple arc $J$ in $\li f(\gamma)$ joining $x_1$ to $y_1$, and we have $\wind(J) = \wind(\alpha_y) + \wind(\li f(I)) + \wind(\alpha_x)$. Note that the arc $I'$ from the statement and $J$ are both simple subarcs of the simple loop $\li f(\gamma)$ joining the same points, so there are two possibilities: $I' = J$ or $I'$ is the complementary arc of $J$ in $\gamma$. In the first case, we have $\wind(I') = \wind(J)$, and in the latter case $I'*J^{-1}$ is a simple loop, so $\abs{\wind(I') - \wind(J)} = \abs{\wind(I'*J^{-1})}\leq 1$. In both cases, we have
$$\abs{\wind(I') - \wind(\li f(I))} \leq \abs{\wind(I') - \wind(J)} + \abs{\wind(J) - \wind(\li f(I))} \leq 1 + \abs{\wind(\alpha_x)} + \abs{\wind(\alpha_y)}$$
and the desired inquality follows.
\end{proof}

\subsection{Distortion of loops and annuli}

If $\gamma\subset \A$ is an essential loop, we define its distortion as
$$\dist(\gamma) = \sup \{\diamup(\sigma): \sigma \text{ is a simple arc in $A$} \}$$
If $A\subset \A$ is an essential closed topological annulus we define its distortion as
$$\dist(A) = \inf \{\dist(\gamma): \gamma \text{ is an essential loop in $A$} \}.$$

The lower width of an annulus in a compact region of $\A$ is related to its distortion by the next lemma.
\begin{lemma}\label{lem:dist-lw} If $A\subset \A$ is an essential closed topological annulus and $\dist(A)>1$, then
$$\lwidth(A) \leq \frac{\diam(\pr_2(A))}{\dist(A)-1}.$$
\end{lemma}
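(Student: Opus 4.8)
The inequality to be proved is equivalent, after clearing denominators and using $\dist(A)>1$, to the bound
$$\dist(A)\ \le\ 1+\frac{\diam(\pr_2(A))}{\lwidth(A)}\qquad(\text{with }\lwidth(A)>0;\text{ the case }\lwidth(A)=0\text{ being trivial}),$$
so it suffices to exhibit a \emph{single} essential loop of $A$ all of whose subarcs have homotopical diameter at most the right-hand side. Equivalently I would argue by contradiction. Set $\ell:=\lwidth(A)$ and $h:=\diam(\pr_2(A))$, and suppose $\dist(A)>1+h/\ell$. After a vertical translation, fix a lift of $A$ to a $T_1$-invariant closed topological annulus $\tilde A\subset \R\times[0,h]$; it is homeomorphic to $\R\times[0,1]$, its two boundary lines $\tilde\bd^-,\tilde\bd^+$ project onto $\bd^-A,\bd^+A$, and $d(\tilde\bd^+,\tilde\bd^-)\ge \ell$.

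The first step is to manufacture many disjoint ``wide arcs'' inside $\tilde A$. Projecting an interior leaf of $\tilde A$ (a copy of $\R\times\{1/2\}$ under a homeomorphism $\tilde A\cong\R\times[0,1]$) gives an essential loop of $A$, which by assumption has a subarc of homotopical diameter $>1+h/\ell$; passing to the sub‑subarc that realizes the homotopical diameter (third bullet before Lemma~\ref{lem:wind-disp}) produces a simple arc $\tilde\sigma\subset\tilde A$ that wraps around at most once and whose first-coordinate projection $\pr_1(\tilde\sigma)$ is an interval of length $w>1+h/\ell$. Because $\tilde\sigma$ wraps around at most once, the translates $\{T_1^k\tilde\sigma\}_{k\in\Z}$ are pairwise disjoint, and their projections $\pr_1(T_1^k\tilde\sigma)$ are the integer translates of an interval of length $w$; hence for a suitably placed vertical line $V$ at least $N$ of these arcs meet $V$, with $N>1+h/\ell$ (so $N-1>h/\ell$). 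The $N$ distinct intersection points lie on the segment $V\cap\tilde A\subset V\cap(\R\times[0,h])$ of length $h$, so by pigeonhole two of them are at distance $<h/(N-1)<\ell$.

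The hard part will be turning ``two of the wide arcs come within $\ell$ of each other on $V$'' into a contradiction with $d(\tilde\bd^+,\tilde\bd^-)\ge\ell$: the two nearby points lie on disjoint translates of one and the same leaf, and mere proximity of two arcs on a leaf is not contradictory. Here I would have to use the closed-annulus structure $\tilde A\cong\R\times[0,1]$ essentially. The plan is: prune the short vertical segment $[p,q]\subset V$ between the two close points, together with the arc of the leaf joining them, into a simple closed curve; determine on which side of the leaf $[p,q]$ lies; and argue, via a plane-separation argument in the spirit of Lemma~\ref{lem:JKP} and the standard facts about separating continua in $\R^2$, that the bounded region so cut off must trap inside $V$ a full boundary-to-boundary crossing of $\tilde A$, i.e.\ a subsegment of $V$ of length $\ge\ell$ with one endpoint on $\tilde\bd^-$ and the other on $\tilde\bd^+$ — which, once the leaf and the two translates are chosen with enough room, is incompatible with $d(p,q)<\ell$. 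Pinning down exactly which essential crossing is trapped, and controlling the combinatorics of the intersections of $V$ both with the leaf and with $\bd\tilde A$, is where I expect the real work to be; it is precisely at this point that one uses that $A$ is a closed topological annulus rather than merely an essential annular continuum.
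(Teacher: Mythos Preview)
Your approach has a genuine gap at the step you yourself flag as ``the hard part''. The pigeonhole gives you two nearby points $p,q$ on the lifted \emph{leaf} $\tilde\gamma$, not on the two boundary lines $\tilde\bd^\pm$. There is no reason why the short segment $[p,q]$ should contain (or force) a full $\tilde\bd^-$--to--$\tilde\bd^+$ crossing: the open segment $(p,q)$, being disjoint from $\tilde\gamma$, lies in one of the two components of $\tilde A\setminus\tilde\gamma$, and it may very well stay in the interior of $\tilde A$ without touching either boundary. Your Jordan--curve plan (close up $[p,q]$ with an arc of $\tilde\gamma$) does not obviously trap a crossing either: the arc of $\tilde\gamma$ joining $p$ to $q$ is long (the points lie on different $T_1$-translates of $\tilde\sigma$), and the disk it bounds can be entirely contained in one complementary region of $\tilde\gamma$ in $\tilde A$. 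So ``mere proximity of two arcs on a leaf'' is, as you suspect, not contradictory, and I do not see how to repair this without essentially importing the paper's idea.

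The paper's argument runs in the opposite direction from yours. Rather than starting with a loop and studying its intersections with a vertical line $L$, it starts with $L$ and studies the components of $A\cap L$ that join $\bd^-A$ to $\bd^+A$ (call their number $m$). One then \emph{constructs} an essential loop $\gamma$ meeting each such crossing interval exactly once; any simple subarc of $\gamma$ then decomposes into at most $m+1$ pieces each disjoint from $L$ except at its endpoints, hence each of homotopical diameter $\le 1$, giving $\dist(\gamma)\le m+1$ and thus $m\ge \dist(A)-1$. Now the pigeonhole is applied to the $m$ crossing intervals themselves (whose total length is at most $\diam(\pr_2(A))$), producing one of length at most $\diam(\pr_2(A))/(\dist(A)-1)$; since its endpoints lie on the two boundary components, this bounds $\lwidth(A)$ directly. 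The key difference is that the pigeonhole acts on objects that already witness $\lwidth$, so no extra topological argument is needed at the end.
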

\begin{remark} With some additioanl work, one may improve the bound on the right hand side to $\diam(\pr_2(A))/(2\dist(A)-1)$, but we leave the details to the reader as we will not need this fact.
\end{remark}
\begin{proof}
Let $M= \dist(A)$, and fix a vertical line $L$ in $\A$. Let $\mc{I}$ be the family of all connected components of $A\cap L$ which connect two points from different boundary components of $A$. We claim
that the number of elements of $\mc{I}$ is bounded below by $dist(A)-1$. To show this, let $m$ be the number of elements of $\mc I$ (which we assume finite, otherwise there is nothing to be done).
Choose an essential loop $\gamma$ in $A$ intersecting each element of $\mc I$ exactly once.
If $\alpha$ is any simple subarc of $\gamma$ with $\diamup(\alpha)\geq 1$, then $\alpha$ is a concatenation of arcs $\alpha_0*\alpha_1*\cdots*\alpha_k$ such that each $\alpha_i$ is disjoint from $L$ except perhaps at its endpoints, and when $0< i \leq k$ the initial point of $\alpha_i$ belongs to some element of $\mc I$. Since $\alpha_i$ is simple and contained in $\gamma$, each element of $\mc I$ appears as the initial point of at most one $\alpha_i$. This implies that $k\leq m$.
From the fact that $\alpha_i$ is disjoint from $L$ except at most at its endpoints, we deduce that $\diamup(\alpha_i)\leq 1$. Since $\alpha$ is the concatenation of the arcs $\alpha_i$, we have $\diamup(\alpha)\leq k+1 \leq m+1$, and taking the supremum among all such arcs $\alpha$ we obtain $\dist(\gamma) \leq m+1$. Thus $m\geq \dist(\gamma)-1\geq \dist(A)-1$ as claimed.

Finally, since the elements of $\mc{I}$ are pairwise disjoint intervals in the vertical line $L$, their total length is at most $\diam(\pr_2(A))$, and since there are at least $\dist(A)-1$ such elements, there must exist some $I\in \mc{I}$ of length at most $\ell = \diam(\pr_2(A))/(\dist(A)-1)$. Since the endpoints of $I$ are in different connected components of $\bd A$, it follows from the definition of lower width that $\lwidth(A)\leq \ell$, as claimed.
\end{proof}

\section{Proof of the main theorem}
Let $f\in \homeo_0(\T^2)$ be a horizontal extension of an irrational rotation, let $\li f\colon \A\to \A$ be a lift of $f$ and let $F\colon \R^2\to \R^2$ be a lift of $\li f$ (which also lifts $f$). Suppose for a contradiction that $\rho(F)$ is not a singleton, so it is an interval of the form $[\rho^-, \rho^+]\times \{\alpha\}$ where $\rho^+ > \rho^-$. Since $\rho(F^n) = n\rho(F)$, replacing $f$ by some power of $f$ if necessary we may assume that $\rho^+ - \rho^- \geq 10$.

Fix $0<\epsilon<1/4$ such that whenever $z_1,z_2\in \R^2$ satisfy $d(z_1,z_2)<\epsilon$ one has $d(F(z_1), F(z_2)) < 1/4$. Note that this implies that for $z_1, z_2\in \A$,
\begin{equation}\label{eq:eps}
\text{if } d(z_1,z_2)<\epsilon \text{ then } \abs{\disp_F(z_1) - \disp_F(z_2)} < 1/4 + \epsilon.
\end{equation}
Moreover, we remark that for any arc $\sigma$ in $\A$,
\begin{equation}\label{eq:eps2}
\text{if $\diamup(\sigma)<\epsilon$ then $\diamup(\li f(\sigma)) < 1/4$}
\end{equation}

Fix $\delta>0$, and let $A\subset \A$, $x,y\in \A\sm A$, $G\subset \N$, and the essential loop $\gamma\subset \A\sm A$ be as in Lemma \ref{lem:todo}.
Let $K = \max_{z\in \A} \abs{\disp_F(z)}$, which is finite since $\disp_F$ is continuous and $T_2$-periodic. Note that for any arc $\sigma$ in $\A$ one has
\begin{equation}\label{eq:K}
\diamup(\li f(\sigma)) \leq \diamup(\sigma) + 2K
\end{equation}

We will show that $\dist(\li f^n(A)) \to \infty$ as $n\to \infty$.
For each $n\in G$ fix a geodesic arc $\sigma_{x,n}$ such that $\sigma_{x,n}$ joins $\li f^n(x)$ to a point $x_n$ of $\li f^n(\gamma)$ minimizing the distance from $x$ to $\li f^n(\gamma)$, and similarly let $\sigma_{y,n}$ be a geodesic arc joining $\li f^n(y)$ to a point $y_n$ of $\li f^n(\gamma)$ minimizing the distance from $y$ to $\li f^n(\gamma)$. Note that both arcs are disjoint from $\li f^n(\gamma)$ except for their endpoints $x_n, y_n$, and $\diamup(\sigma_{x,n}) \leq d(\li f^n(x), \li f^n(\gamma))$ (and similarly for $\sigma_{y,n}$).

For each $n\geq 0$, let $I_n$ be a simple arc in $\gamma$ joining $x_n$ to $y_n$. Note that if $n\in G$, from Lemma \ref{lem:todo} we have $\diamup(\sigma_{x,n})<\epsilon<1/4$, so by (\ref{eq:eps2}) we have $\diamup(\li f(\sigma_{x,n}))<1/4$. An analogous estimate holds for $\sigma_{y,n}$.
Thus from Lemma \ref{lem:dragging} we have
$$\wind(I_{n+1}) \geq \wind(\li f(I_n)) - 7$$
and so from Lemma \ref{lem:wind-disp},
$$\wind(I_{n+1}) \geq \wind(I_n) + \disp_F(y_n) - \disp_F(x_n) - 7.$$
Noting that $d(x_n, \li f^n(x)) < \epsilon$ when $n\in G$, from (\ref{eq:eps}) we have $$\abs{\disp_F(x_n) - \disp_F(\li f^n(x))} < 1/4 + \epsilon < 1/2,$$ and similarly for $y$. Thus,
\begin{equation}\label{eq:inG}
\wind(I_{n+1}) \geq \wind(I_n) + \disp_F(\li f^n(y)) - \disp_F(\li f^n(x)) - 8.
\end{equation}
On the other hand, if $n\notin G$ we may obtain a rougher estimate: Lemma \ref{lem:todo}(4) implies $\diamup(\sigma_{x,n})\leq B$, so by (\ref{eq:K}) we have $\diamup(\li f(\sigma_{x,n})) \leq 2K + B$, hence again from Lemma \ref{lem:dragging}
$$\wind(I_{n+1}) \geq \wind(\li f(I_n)) - (3 + 4B + 4K),$$
and from Lemma \ref{lem:wind-disp},
$$\wind(I_{n+1}) \geq  \wind(I_n) + \disp_F(y_n) - \disp_F(x_n) - (3 + 4B + 4K).$$
Since $|\disp_F(x_n) - \disp_F(\li f^n(x))|\leq 2K$, we conclude
\begin{equation}\label{eq:notinG}
\wind(I_{n+1}) \geq  \wind(I_n) + \disp_F(\li f^n(y)) - \disp_F(\li f^n(x)) - (3 + 4B + 6K).
\end{equation}
Combining (\ref{eq:inG}) and (\ref{eq:notinG}) we obtain
$$\wind(I_n) \geq  \wind(I_0)  -8r_n - (3 + 4B + 6K)(n-r_n) + \sum_{k=0}^{n-1} \disp_F(\li f^k(y)) - \disp_F(\li f^k(x))$$
where $r_n$ is the cardinality of $G\cap \{1,2,\dots,n\}$. Note that the summation above is the same as $\disp_{F^n}(y) - \disp_{F^n}(x)$. Thus, using the fact that $\lim_{n\to \infty} \disp_{F^n}(y)/n - \disp_{F^n}(x)/n = \rho^+-\rho^- \geq 10$ and that the density of $G$ is at last $1-\delta$, we have
$$\liminf_{n\to \infty} \wind(I_n)/n \geq -8(1-\delta) - (3-4B + 6K)\delta + 10.$$
Recalling that the constants $K$ and $B$ depend only on $f$ and not on $\delta$, we may fix $\delta<(3-4B + 6K)^{-1}$ to conclude that
$$\liminf_{n\to \infty} \wind(I_n)/n \geq 1.$$
In particular, $\wind(I_n)\to \infty$ as $n\to \infty$.
Now let $\gamma'\subset \li f^n(A)$ be any essential simple loop. Recall from Lemma \ref{lem:todo}(5) that $A$ separates $\gamma$ from $\{x,y\}$, so $\li f^n(A)$ separates $\li f^n(\gamma)$ from $\{\li f^n(x),\li f^n(y)\}$.  This implies that any arc joining $\li f^n(x)$ or $\li f^n(y)$ to a point of $\li f^n(\gamma)$ intersects every essential loop in $A$. In particular, the arcs $\sigma_{x,n}$ and $\sigma_{y,n}$ must intersect $\gamma'$. Let $\sigma_{x,n}'$ and $\sigma_{y,n}'$ be simple subarcs of $\sigma_{x,n}$ and $\sigma_{y,n}$ joining $x_n$ to a point $x'$ of $\gamma'$ and $y_n$ to a point $y'$ of $\gamma'$, and otherwise disjoint from $\gamma'$. Denoting by $I'$ a simple subarc of $\gamma'$ joining $x'$ to $y'$, we have from Lemma \ref{lem:wind-2} that
$$\abs{\wind(I_n) - \wind(I')} \leq 2\diamup(\sigma_{x,n}') + 2\diamup(\sigma_{y,n}') + 2.$$
Since $\diamup(\sigma_{x,n}')\leq B$ and $\diamup(\sigma_{y,n}')\leq B$, we conclude that
$$\wind(I') \geq \wind(I_n) - 4B - 2.$$
Thus
$$\diamup(\gamma') \geq \abs{\wind(I')} \geq \abs{\wind(I_n) - 4B - 2}.$$
Since this estimate is independent of the choice of the loop $\gamma'$ in $\li f^n(A)$, we conclude that
$$\dist(\li f^n(A)) \geq \abs{\wind(I_n) - 4B - 2} \to \infty$$
as $n\to\infty$.
But then, recalling that $\diam(\pr_2(\li f^n(A))) \leq B$, Lemma \ref{lem:dist-lw} implies that $\lwidth(\li f^n(A))\to 0$ as $n\to \infty$, contradicting Lemma \ref{lem:todo}(5). This completes the proof.
\qed

\bibliographystyle{koro}
\bibliography{bib}

\end{document}